\newcommand{\rmv}[1]{}
\numberwithin{equation}{section}
\newcommand{\HH}{\mathcal{H}}
\newcommand{\HK}{\mathcal{K}}
\newcommand{\HM}{\mathcal{M}}
\newcommand{\HE}{\mathcal{E}}
\newcommand{\HD}{\mathcal{D}}
\newcommand{\HS}{\mathcal{S}}
\newcommand{\HB}{\mathcal{B}}
\newcommand{\HN}{\mathcal{N}}
\newcommand{\D}{\mathbb{D}}
\newcommand{\C}{\mathbb{C}}
\newcommand{\T}{\mathbb{T}}
\newcommand{\ran}{\mathrm{ran \ }}
\theoremstyle{plain}
\newtheorem{theorem}{Theorem}[section]
\newtheorem{lemma}[theorem]{Lemma}
\newtheorem{prop}[theorem]{Proposition}
\newtheorem{corollary}[theorem]{Corollary}
\newtheorem*{thma}{Theorem A}
\newtheorem*{thmb}{Theorem B}
\newtheorem*{thmc}{Theorem C}
\theoremstyle{definition}
\newtheorem{example}[theorem]{Example}
\begin{document}

\date{}

\author{Hongxin Chen}
\address{School of Mathematics, Hunan University, Changsha, 410082, PR China}
\email{hongxinchen@hnu.edu.cn}

\author{Caixing Gu}
\address{Department of Mathematics, California Polytechnic State University, San Luis Obispo, CA 93407, USA}
\email{cgu@calpoly.edu}

\author{Shuaibing Luo}
\address{School of Mathematics, Hunan University, Changsha, 410082, PR China}
\email{sluo@hnu.edu.cn}

\subjclass[2010]{30H45, 47B35, 47A15, 30H10, 30H05}
%\thanks{S. Luo was supported by NNSFC (No. 12271149).}

\title[Schur functions with mate]{De Branges-Rovnyak spaces generated by row Schur functions with mate}

\begin{abstract}
In this paper, we study the de Branges-Rovnyak spaces $\HH(B)$ generated by row Schur functions $B$ with mate $a$. We prove that the polynomials are dense in $\HH(B)$, and characterize the backward shift invariant subspaces of $\HH(B)$. We then describe the cyclic vectors in $\HH(B)$ when $B$ is of finite rank and $\dim (aH^2)^\perp < \infty$.
\end{abstract}
\keywords{De Branges-Rovnyak space; Schur function; mate; invariant subspace; cyclic vector; Carath\'{e}odory condition.}
\maketitle

%\tableofcontents

\section{Introduction}
Suppose $A$ is a contraction from a Hilbert space $\HH$ to a Hilbert space $\HK$. Let $\HM(A)$ be the operator range of $A$ with the Hilbert space structure that makes $A$ a coisometry. Let $D_{A^*} = (I-AA^*)^{1/2}$ be the defect operator of $A^*$, and let $\HH(A)$ be $\HM(D_{A^*})$. We call $\HH(A)$ the de Branges-Rovnyak space.

Let $\HD$ and $\HE$ be separable Hilbert spaces, let $H^2(\HD)$ be the Hardy space of square summable power series with coefficients in $\HD$ on the unit disc $\D$ and let $\HS(\HD, \HE)$ be the Schur class functions, i.e. the collection of functions that are analytic on $\D$ and take values in the contractive operators in $\HB(\HD,\HE)$. Suppose $B \in \HS(\HD, \HE)$ is a Schur function. Let $T_B$ be the multiplication operator with symbol $B$ from $H^2(\HD)$ to $H^2(\HE)$. We denote the de Branges-Rovnyak spaces $\HH(T_B)$ and $\HH(T_B^*)$ by $\HH(B)$ and $\HH(B^*)$, respectively. It is known that $\HH(B)$ has the reproducing kernel
$$K^B_w(z)= \frac{I_{\HE}-B(z)B(w)^*}{1-z\overline{w}}.$$
If $L$ is the backward shift
$$Lf(z)=\frac{f(z)-f(0)}{z},$$
then $L$ acts contractively on $\HH(B)$, and $\HH(B)$ is contractively contained in $H^2(\HE)$. Note that if $B(0)=0$, then $K^B_0(z)=I_{\HE}$, hence the constant functions are contained in $\HH(B)$, $\HE\subseteq \HH(B)$, and in fact $\ker L=\HE$. We refer to \cite{BallBoloBasics, dBR66, FM16, FM162, Sa94} for the background of the de Branges-Rovnyak spaces.

Extensive researches have been done on $\HH(B)$ where $B$ is a scalar Schur
function in the last several decades, though many interesting questions
remain open (\cite{FM162}). The research on $\HH(B)$ where $B$ is an operator-valued
Schur function is far less for many reasons (\cite{BallBoloBasics}). For example, the function
theory of operator-valued Schur functions is much more complicated and
operator-valued functions do not commute. Inspired by some recent works on
$\HH(B)$ for operator-valued $B$ (\cite{AM19, LGR}) and matrix-valued Aleksandrov-Clark
measure (\cite{LMT}), we aim to extend several fundamental results of $\HH(B)$ for
scalar $B$ by Sarason (\cite{Sa86, Sa88}) and recent results on cyclic vectors of $\HH(B)
$ for scalar $B$ by Bergman (\cite{Berg}) to the $\HH(B)$ for $B$ a row vector-valued
Schur function. Since our $B\in S(\HD,\C)$ where $\HD$ could be infinite
dimensional, our approach also extends some results of \cite{AM19} where $\HD$ is
assumed to be finite dimensional.

The main results in this paper are the following three theorems.
\begin{thma}
Let $B \in \HS(\HD, \C)$ be such that $\log (1- BB^*) \in L^1(\T)$. Then the polynomials are dense in $\HH(B)$.
\end{thma}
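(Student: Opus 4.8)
The plan is to reduce everything to the scalar mate $a$ and then carry over Sarason's argument for non-extreme symbols. First I would use the hypothesis $\log(1-BB^*) \in L^1(\T)$ together with Szeg\H{o}'s theorem to produce the mate: a scalar outer function $a$ with $a(0) > 0$ and $|a|^2 = 1 - BB^*$ a.e.\ on $\T$. Since $\HE = \C$, the quantity $BB^* = \sum_j |b_j|^2$ and the kernel $K^B_w$ are scalar-valued, so $a$ is genuinely scalar even when $\HD$ is infinite dimensional; this is exactly what lets the scalar theory survive the passage to a row symbol.

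Next I would verify that the polynomials lie in $\HH(B)$. The value $K^B_0 = 1 - B\,B(0)^*$ is a reproducing kernel, hence lies in $\HH(B)$; combined with the fact that the constant function $1$ belongs to $\HH(B)$ (a consequence of the existence of the mate, established in the earlier development of the mate theory), one also gets $B\,B(0)^* \in \HH(B)$. The crucial structural input is that the forward shift $S = T_z$ acts boundedly on $\HH(B)$, the row analogue of Sarason's characterization of non-extreme symbols, which I would obtain from an operator identity for $T_B$ furnished by the mate. Granting $1 \in \HH(B)$ and $S$ bounded, $z^n = S^n 1 \in \HH(B)$ for every $n$, so the polynomials $\mathcal{P} \subseteq \HH(B)$.

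The heart of the matter is density. Here I would use the mate to set up the companion map: $f \in \HH(B)$ if and only if there is some $f^+ \in H^2(\HD)$ with $T_B^* f = T_{\bar a} f^+$, in which case $\|f\|_{\HH(B)}^2 = \|f\|_{H^2}^2 + \|f^+\|_{H^2(\HD)}^2$; because $a$ is outer, $T_{\bar a}$ is injective and $f^+$ is unique. Suppose $g \in \HH(B)$ is orthogonal to every polynomial. Expanding $\langle g, z^n\rangle_{\HH(B)} = \langle g, z^n\rangle_{H^2} + \langle g^+,(z^n)^+\rangle_{H^2(\HD)}$ and using the recursion relating the companion of $Sf$ to that of $f$, the orthogonality relations become a triangular system in the Taylor coefficients of $g$ and $g^+$. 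Feeding this back through the defining relation $T_B^* g = T_{\bar a} g^+$ and invoking the outerness of $a$ (so that $\bar a$ carries no inner factor) should force $g^+ = 0$ and then $g = 0$, giving density.

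The main obstacle is the operator-valued content of the second and third steps: establishing the boundedness of $S$ on $\HH(B)$ and the companion identity $T_B^* f = T_{\bar a} f^+$ when $\HD$ is infinite dimensional, where $B$ is a genuine row and $T_B^*$ maps into the vector Hardy space $H^2(\HD)$. Noncommutativity and the possibly infinite number of components mean the scalar Toeplitz computations must be replaced by their operator versions, and care is needed to ensure the companion $f^+$ lands in $H^2(\HD)$ rather than merely in $L^2(\HD)$; the outerness of the scalar mate $a$ is precisely what rescues injectivity and uniqueness at this step.
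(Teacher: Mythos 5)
Your plan reduces everything to the scalar mate $a$, but the central identity it rests on --- that $f\in\HH(B)$ if and only if $T_B^*f=T_{\overline{a}}f^+$ for some $f^+\in H^2(\HD)$, with $\|f\|_{\HH(B)}^2=\|f\|_{H^2}^2+\|f^+\|_{H^2(\HD)}^2$ --- is not correct once $\dim\HD\ge 2$. One has $f\in\HH(B)$ iff $T_B^*f\in\HH(B^*)=\HM\bigl((I-T_B^*T_B)^{1/2}\bigr)$, and $I-T_B^*T_B=T_{I_{\HD}-B^*B}$; but $I_{\HD}-B(z)^*B(z)$ is a rank-one perturbation of the identity of $\HD$, not the scalar operator $(1-B(z)B(z)^*)I_{\HD}=|a(z)|^2I_{\HD}$. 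Since $(1-BB^*)I_{\HD}\le I_{\HD}-B^*B$ with equality only on the span of $B^*$, Douglas's lemma gives $T_{\overline{a}}H^2(\HD)\subseteq\HH(B^*)$ but not the reverse inclusion unless $\|B\|_\infty<1$; so your condition is sufficient for membership in $\HH(B)$ but in general not necessary, and the norm formula attached to it is false. This is exactly the operator-valued obstacle you flag at the end, and it does not resolve in the scalar-like way: the paper must invoke the Rosenblum--Rovnyak factorization theorem to produce an operator-valued outer $A\in\HS(\HD,\HD)$ with $B^*B+A^*A=I_{\HD}$ a.e.\ on $\T$, and the companion $f^+$ is the unique element of $H^2(\HD)$ with $B^*f+A^*f^+\in\overline{H^2_0(\HD)}$; the scalar mate controls at best $\det A$, not $A$ itself.

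Two further gaps. Your route to the containment of the polynomials via $z^n=S^n1$ presupposes that $M_z$ is bounded on $\HH(B)$, which is not known under the sole hypothesis $\log(1-BB^*)\in L^1(\T)$ (the paper proves it only when $A(0)$ is invertible, e.g.\ when $\dim\HD<\infty$); the containment is obtained instead from $\HM(\overline{a})\subseteq\HH(B)$ (the Douglas-lemma inequality $T_{\overline{a}}T_a\le I-T_BT_B^*$) plus the fact that $\ran T_{\overline{a}}$ contains all polynomials because $a$ is outer. And your density step --- ``a triangular system in the Taylor coefficients \dots should force $g^+=0$'' --- is a hope rather than an argument; the pairing $\langle g^+,(z^n)^+\rangle$ does not triangularize, since $(z^n)^+$ is governed by $A$. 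The mechanism that actually works is: using the isometric embedding $f\mapsto(f,f^+)$ and the identity $(T_{\overline{\varphi}}f)^+=T_{\overline{\varphi}}f^+$, one computes $\langle f,T_{\overline{az^n}}f\rangle_{\HH(B)}=\int_\T az^n(|f|^2+\|f^+\|_{\HD}^2)\,|dz|/2\pi$, so if $f\perp T_{\overline{a}}L^nf$ for all $n\ge0$ then $a(|f|^2+\|f^+\|^2)\in H^1_0$ and outerness of $a$ forces $f=0$; hence $T_{\overline{a}}\HH(B)$ is dense in $\HH(B)$, and since $\HM(\overline{a})\hookrightarrow\HH(B)$ is contractive and each $T_{\overline{a}}z^n$ is a polynomial, density of the polynomials follows. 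You would need to replace your third step by an argument of this kind.
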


\begin{thmb}
Let $B \in \HS(\HD, \C)$ be such that $\log (1- BB^*) \in L^1(\T)$. Then any proper $L$-invariant subspace of $\HH(B)$ is of the form
$$\HH(B) \cap K_\theta,$$
where $\theta$ is an inner function and $K_\theta = H^2 \ominus \theta H^2$.
\end{thmb}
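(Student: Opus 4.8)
The plan is to reduce the statement to Beurling's theorem for the backward shift on $H^2$ by means of the contractive inclusion $\HH(B)\subseteq H^2$, and then to recover the full intersection $\HH(B)\cap K_\theta$ by transporting an $H^2$-density statement into the $\HH(B)$-norm. Throughout, write $a$ for the mate of $B$, which exists precisely because $\log(1-BB^*)=\log|a|^2\in L^1(\T)$. The tool I would record first is the mate (companion) description of $\HH(B)$: a function $f\in H^2$ lies in $\HH(B)$ if and only if there is $f^+\in H^2(\HD)$ with $B^*f=\bar a\,f^+$ a.e.\ on $\T$, in which case $\|f\|_{\HH(B)}^2=\|f\|_{H^2}^2+\|f^+\|_{H^2(\HD)}^2$. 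In particular the $\HH(B)$-inner product splits as $\langle f,h\rangle_{\HH(B)}=\langle f,h\rangle_{H^2}+\langle f^+,h^+\rangle_{H^2(\HD)}$, and this identity is the bridge between the two topologies.

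First I would carry out the reduction. Let $\HN$ be a proper closed $L$-invariant subspace and set $\HM=\overline{\HN}^{\,H^2}$. Since the inclusion $\HH(B)\hookrightarrow H^2$ is contractive and commutes with $L$ (the difference quotient is the same on both spaces), $\HM$ is invariant under the backward shift on $H^2$: if $f_n\in\HN$ and $f_n\to h$ in $H^2$, then $Lf_n\in\HN$ and $Lf_n\to Lh$ in $H^2$, so $Lh\in\HM$. By the backward-shift form of Beurling's theorem, $\HM$ is either $H^2$ or a model space $K_\theta$ with $\theta$ inner. The containment $\HN\subseteq\HH(B)\cap\HM$ is immediate, so everything comes down to the reverse containment $\HH(B)\cap\HM\subseteq\HN$; granting it, properness of $\HN$ rules out $\HM=H^2$, whence $\HM=K_\theta$ and $\HN=\HH(B)\cap K_\theta$.

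For the reverse containment I would argue that the $\HH(B)$-orthogonal complement of $\HN$ inside $\HH(B)\cap K_\theta$ is trivial. Let $g\in\HH(B)\cap K_\theta$ with $g\perp_{\HH(B)}\HN$; the goal is $g=0$. Using the split inner product, for every $h\in\HN$ one has $0=\langle g,h\rangle_{H^2}+\langle g^+,h^+\rangle_{H^2(\HD)}$. Choosing $h=h_n\in\HN$ with $h_n\to g$ in $H^2$ (possible since $\HN$ is $H^2$-dense in $K_\theta\ni g$) gives $\langle g,h_n\rangle_{H^2}\to\|g\|_{H^2}^2$, hence $\langle g^+,h_n^+\rangle_{H^2(\HD)}\to-\|g\|_{H^2}^2$. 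Since $L$ acts contractively on $\HH(B)$, the identity $\|Lh\|_{\HH(B)}^2=\|Lh\|_{H^2}^2+\|(Lh)^+\|^2$ already bounds companion norms along $L$-orbits by $\|h\|_{\HH(B)}$; exploiting that $g$ is orthogonal to the \emph{whole} $L$-orbit $\{L^k h_n\}$, together with the way the companion map transforms under $L$ (the boundary identity $B^*Lh=\bar z\,B^*h-\bar z\,h(0)B^*$ determines $(Lh)^+$ in terms of $h^+$), I would force enough companion orthogonality to conclude $\|g\|_{H^2}=0$. Equivalently, the projection $P_{\HN}f$ already produces an $\HH(B)$-bounded element of $\HN$, and a weak-limit argument in $\HH(B)$ (bounded sequences have weakly convergent subsequences, $\HN$ is weakly closed, and weak convergence forces pointwise convergence through the evaluations $f\mapsto\langle f,K_w^B\rangle_{\HH(B)}$) upgrades $H^2$-approximation by bounded elements of $\HN$ to membership in $\HN$.

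The main obstacle is exactly the control of the companion functions $h_n^+$ of the approximants: $H^2$-convergence $h_n\to g$ says nothing about $\{h_n^+\}$, so the content of the theorem is to produce an $\HH(B)$-bounded approximating sequence inside $\HN$, i.e.\ to rule out that the companion norms escape to infinity. This is where $L$-invariance of $\HN$ and the polynomial density of Theorem A enter: density guarantees that the $L$-orbits of elements of $\HN$ are rich enough to reach $g$ in the $H^2$ topology, while the contractivity of $L$ keeps those orbits norm-bounded. I expect that making the companion bookkeeping under $L$ fully rigorous — rather than the surrounding soft functional analysis — will be the technically demanding step.
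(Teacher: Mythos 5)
Your reduction is the right one and it matches the paper's: pass to $\HM=\mathrm{clos}_{H^2}\HN$, invoke Beurling's theorem for the backward shift on $H^2$, and reduce everything to the inclusion $\HH(B)\cap\HM\subseteq\HN$. But your argument stops exactly where the theorem actually lives. You correctly observe that $H^2$-convergence $h_n\to g$ with $h_n\in\HN$ gives no control whatsoever on the companions $h_n^+$, and you then propose to ``force enough companion orthogonality'' by tracking how $(Lh)^+$ relates to $h^+$ along $L$-orbits; no mechanism is given for why this bookkeeping should close up, and I do not see one: orthogonality of $g$ to $\{L^k h_n\}$ only produces relations involving the uncontrolled $h_n^+$ on the companion side, which is the same obstruction you started with. (A secondary issue: the companion relation $B^*f=\overline a\,f^+$ a.e.\ is not the right one for a row $B$; the correct statement, Theorem \ref{matematrix}(i), is $B^*f+A^*f^+\in\overline{H^2_0(\HD)}$ with $A$ the $\HD$-valued outer factor of $I-B^*B$, and it is a statement about analytic projections, not an a.e.\ identity of boundary functions.)

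The missing idea, which is the whole content of the paper's proof, is to use the mate as an \emph{operator} rather than as bookkeeping. Since $\HM(\overline a)\subseteq\HH(B)$ contractively (Lemma \ref{containment}, via Douglas's lemma and $T_{\overline a}T_a=I-T_{BB^*}\geq I-T_BT_B^*$), the map $T_{\overline a}:H^2\to\HH(B)$ is bounded; this converts your $H^2$-approximation into $\HH(B)$-approximation for free: if $h_n\in\HN$ and $h_n\to g$ in $H^2$, then $T_{\overline a}h_n\to T_{\overline a}g$ in $\HH(B)$, and since $\HN$ is $T_{\overline a}$-invariant (approximate $\overline a$ by $\overline{p_n}$ and use $T_{\overline{p_n}}=p_n(L)$ together with von Neumann's inequality, as in Lemma \ref{contractionhbs2}), one obtains $T_{\overline a}\bigl(\HH(B)\cap\HM\bigr)\subseteq\HN$. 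The extra factor $\overline a$ is then removed by the key density lemma (Lemma \ref{backwarddense}): for any $L$-invariant subspace $\HN'$ of $\HH(B)$, $T_{\overline a}\HN'$ is dense in $\HN'$. Its proof is where the split inner product and the outerness of $a$ are actually used, via the moment computation $\int_\T a(z)z^n\bigl(|f(z)|^2+\|f^+(z)\|_{\HD}^2\bigr)\,|dz|/2\pi=0$ for all $n\geq0$, which forces $a\bigl(|f|^2+\|f^+\|^2\bigr)\in H^1_0$ and hence $f=0$. Applying this with $\HN'=\HH(B)\cap\HM$, which is closed and $L$-invariant, yields $\HH(B)\cap\HM\subseteq\HN$. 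Without some substitute for this $T_{\overline a}$ device, your argument does not close.
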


\begin{thmc}
Let $B \in \HS(\HD, \C)$ be such that $\log (1- BB^*) \in L^1(\T)$, and $a$ the mate of $B$. Suppose $\dim \HD < \infty, \dim (aH^2)^\perp < \infty$, and $\overline{\lambda_1}, \overline{\lambda_2}, \ldots, \overline{\lambda_s}$ are the distinct eigenvalues of $M_z^*$ on $(aH^2)^\perp$. Then each $\overline{\lambda_j}$ is in $\T$, and $f \in \HH(B)$ is cyclic in $\HH(B)$ if and only if $f$ is outer and $f(\lambda_j) \neq 0$, $j = 1, 2, \ldots, s$.
\end{thmc}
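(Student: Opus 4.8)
The plan is to realize cyclicity through the forward multiplication operator and to split $\HH(B)$ into an infinite-dimensional ``shift part'' and a finite-dimensional ``boundary part''. First I would collect the consequences of the non-extreme hypothesis $\log(1-BB^*)\in L^1(\T)$: the mate $a$ is a scalar outer function, the map $T_a\colon H^2\to\HH(B)$, $h\mapsto ah$, is an isometry onto the closed subspace $aH^2$, and multiplication by $z$, written $M_z$, is bounded on $\HH(B)$ and satisfies $M_zT_a=T_aS$, where $S$ is the unilateral shift. Thus $aH^2$ is $M_z$-invariant, $M_z|_{aH^2}$ is unitarily equivalent to $S$, and cyclicity in $\HH(B)$ means cyclicity for $M_z$. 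Setting $\HN:=\HH(B)\ominus aH^2=(aH^2)^\perp$, the hypothesis gives $\dim\HN<\infty$; since $aH^2$ is $M_z$-invariant, $\HN$ is $M_z^*$-invariant, and in $\HH(B)=aH^2\oplus\HN$ the operator $M_z$ is block upper triangular with corners $S$ and a finite matrix $D:=P_\HN M_z|_\HN$, so that $M_z^*|_\HN=D^*$. As a consistency check, Theorem~A (polynomials are dense) says $1$ is cyclic, and $1$ is outer with $1(\lambda_j)\ne0$.

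To prove each $\overline{\lambda_j}\in\T$ I would identify $\HN$ explicitly. The interior reproducing kernels $K^B_w$ ($w\in\D$) satisfy $M_z^*K^B_w=\overline w K^B_w$ and always have a nonzero component in $aH^2$, so they do not lie in $\HN$; hence no eigenvalue of $M_z^*|_\HN$ is interior. The positive content is that $\HN$ is spanned by boundary reproducing kernels $K^B_{\lambda_j}$ at the finitely many points $\lambda_j\in\T$ where $a$ vanishes nontangentially, equivalently where $B$ satisfies the Carath\'eodory condition (finite angular derivative). Indeed $\langle ah,K^B_{\lambda_j}\rangle_{\HH(B)}=a(\lambda_j)h(\lambda_j)=0$, so $K^B_{\lambda_j}\perp aH^2$, i.e. $K^B_{\lambda_j}\in\HN$, and $M_z^*K^B_{\lambda_j}=\overline{\lambda_j}K^B_{\lambda_j}$ with $\overline{\lambda_j}\in\T$. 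That these kernels exhaust $\HN$, so that the eigenvalues of $D^*$ are exactly the $\overline{\lambda_j}\in\T$, is the first substantive step, and it uses $\dim\HN<\infty$ together with the boundary theory of $\HH(B)$. Finally $\langle f,K^B_{\lambda_j}\rangle_{\HH(B)}=f(\lambda_j)$ for all $f\in\HH(B)$, which is what lets the boundary values $f(\lambda_j)$ enter the cyclicity criterion.

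For the equivalence I would treat necessity first, as it is clean. If $f$ is not outer, write $f=\theta f_1$ with $\theta$ inner and nonconstant; then $\HH(B)\cap\theta H^2$ is closed (the inclusion $\HH(B)\hookrightarrow H^2$ is continuous) and $M_z$-invariant, it contains $f$, and it is proper because the outer function $a$ is not in $\theta H^2$. Hence $f$ is not cyclic. If $f(\lambda_j)=0$ for some $j$, then $f\perp K^B_{\lambda_j}$; since $\operatorname{span}\{K^B_{\lambda_j}\}$ is $M_z^*$-invariant, its orthogonal complement is a proper closed $M_z$-invariant subspace containing $f$, so again $f$ is not cyclic. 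For sufficiency, let $f$ be outer with $f(\lambda_j)\ne0$ for all $j$ and put $\HH_f:=\bigvee_{n\ge0}M_z^nf$. The main point is to show $aH^2\subseteq\HH_f$; granting this, the image $P_\HN f$ of $f$ in $\HH(B)/aH^2\cong\HN$ generates $\HN$ under $D$ precisely because each eigenspace of $D^*$ is one dimensional, spanned by $K^B_{\lambda_j}$, and the pairing $\langle P_\HN f,K^B_{\lambda_j}\rangle=f(\lambda_j)$ is nonzero for every $j$; then $\HH_f+aH^2=\HH(B)$ and so $\HH_f=\HH(B)$.

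I expect the inclusion $aH^2\subseteq\HH_f$ to be the crux. Because the cyclic span is formed in the stronger $\HH(B)$ norm, the fact that $f$ is $S$-cyclic in $H^2$ (i.e. outer) does not immediately fill $aH^2$, and since the two corners $S$ and $D$ have spectra meeting on $\T$, the off-diagonal block cannot be discarded. I would attack this by viewing $(\HH(B),M_z)$ as a finite-dimensional extension of $(H^2,S)$ through $T_a$: reduce $aH^2\subseteq\HH_f$ to $a\in\HH_f$, so that $az^n=M_z^na\in\HH_f$ and, $T_a$ being isometric with polynomials dense in $H^2$, $aH^2=\overline{a\,\C[z]}\subseteq\HH_f$; and then extract $a$ from the cyclic span of the outer $f$ using the density of polynomials from Theorem~A and the finite dimensionality of $\HN$ to control the coupling, with Theorem~B available to pass between $M_z$-invariant and $L$-invariant subspaces when isolating the proper invariant subspaces that could obstruct cyclicity.
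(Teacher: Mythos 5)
Your outline reproduces the paper's architecture: the eigenvalues lie on $\T$ because an interior eigenvector of $M_z^*$ would be a kernel $K^B_w$, forcing $a(w)=0$ against outerness; necessity comes from the proper $M_z$-invariant subspaces $\HH(B)\cap\theta H^2$ and $\{K^B_{\lambda_j}\}^\perp$; sufficiency is split into $aH^2\subseteq[f]$ followed by a finite-dimensional argument in $(aH^2)^\perp$ using one-dimensionality of the eigenspaces of $M_z^*$. However, the step you yourself single out as the crux, $aH^2\subseteq[f]$, is precisely the step you do not prove, and your sketch (``extract $a$ from the cyclic span \dots to control the coupling'') does not contain the required idea. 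The difficulty is that $[f]$ is by definition invariant only under multiplication by polynomials, whereas one needs invariance under multiplication by the non-polynomial multiplier $a$. The paper obtains this from Aleman--Malman's description of singly generated $M_z$-invariant subspaces (\cite[Theorem 5.11]{AM19}): with $\phi\in[f]\ominus z[f]$ of norm one, $[f]=\{g\in\HH(B):\ g/\phi\in H^2,\ (g/\phi)\phi^+\in H^2(\HD)\}$, from which $a[f]\subseteq[f]$ is immediate; then $ah$ is approximated by $aqf$ (outerness of $f$ in $H^2$ together with contractivity of $h\mapsto ah$ from $H^2$ into $\HH(B)$, Lemma \ref{containment}) and $aqf$ by $pf$ (legitimate only because $aqf\in[f]$). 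Without this theorem, or some substitute yielding $a[f]\subseteq[f]$, the argument is incomplete: density of polynomials and $\dim(aH^2)^\perp<\infty$ do not by themselves control the coupling, exactly because the $\HH(B)$-norm on $aH^2$ is strictly stronger than the $H^2$-norm.

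Two further inaccuracies should be flagged. First, $h\mapsto ah$ is a contraction from $H^2$ into $\HH(B)$, not an isometry onto a closed subspace; nothing forces $aH^2$ to be closed in $\HH(B)$, so $M_z|_{aH^2}$ need not be unitarily equivalent to the shift (this is not needed for the proof, but it is asserted). Second, $(aH^2)^\perp$ need not be spanned by the eigenvectors $K^B_{\lambda_j}$: the compression of $M_z^*$ to $(aH^2)^\perp$ can have nontrivial Jordan blocks, so $\dim(aH^2)^\perp$ may exceed $s$, and one must work with the root subspaces $\bigvee_{n}\ker(M_z^*-\overline{\lambda_j})^n$ as the paper does. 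Your final deduction survives this because it only uses that each eigenspace of $M_z^*$ is one-dimensional (a consequence of polynomial density), but the structural claim as you state it is false in general.
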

When $\dim \HD = 1$, Theorem A and Theorem B are foundational results in the theory
of $\HH(b)$ spaces which were proved in 1986 by Sarason in \cite{Sa86}. When $\dim
\HD<\infty ,$ Theorem A and Theorem B were obtained in 2019 by Aleman and Malman in \cite{AM19} through some deep results about the $\HH(B)$ spaces established in their paper. When $\dim \HD=1,$ Theorem C
were shown by Bergman in \cite{Berg} recently. We will prove Theorem A and B in
Section 2, and prove Theorem C in Section 3. In Section 3, we also study when $B
$ satisfies the Carath\'{e}odory condition by using some idea of
Aleksandrov-Clark measure associated with $B$ from \cite{LMT}. Our results
complement some results in \cite{LMT} where $B$ is matrix-valued. Along the way, we
also extend some results of Sarason \cite{Sa88} on angular derivatives of functions
in $\HH(B)$ and reproducing kernel of $\HH(B)$ on the circle.

\section{Schur functions having mate}
\subsection{A characterization of de Branges-Rovnyak space}
Suppose $B \in \HS(\HD, \C)$ is a Schur function, where $\HD$ is a separable Hilbert space and $\C$ is the complex field. Let $\T$ be the unit circle. If $\log (1- BB^*) \in L^1(\T)$, then there is a unique outer function $a$ with $a(0) > 0$ such that
$$|a(z)|^2 + B(z)B(z)^* = 1, a.e. ~z \in \T.$$
We call $a$ the {\it mate} of $B$. It is clear that $a$ is in $H^\infty_1$, the unit ball of $H^\infty$. We denote $\HM(T_a)$ and $\HM(T_{\overline{a}})$ by $\HM(a)$ and $\HM(\overline{a})$, respectively.

Suppose $B \in \HS(\HD, \C)$. If $\HD$ is finite dimensional, and $\HH(B)$ is $M_z$-invariant, then it was shown in \cite[Theorem 5.2]{AM19} that $\log (1- BB^*) \in L^1(\T)$. But if $\log (1- BB^*) \in L^1(\T)$, we don't necessarily have that $\HD$ is finite dimensional.
\begin{example}\label{infiniterankex}
Let $b_1(z) = \frac{1+z}{2\sqrt{2}}, b_i(z) = \frac{z^i}{\sqrt{2^i}}, i = 2, 3,\ldots$ and $B = (b_1, b_2, \ldots)$. Then
$$B(z)B(z)^* = \frac{|1+z|^2}{8} + \sum_{i=2}^\infty \frac{|z|^{2i}}{2^i} = \frac{|1+z|^2}{8} + \frac{1}{2}, z\in \T.$$
So
$$1-B(z)B(z)^* = \frac{1}{2} - \frac{|1+z|^2}{8}, z\in \T,$$
and $\log (1- BB^*) \in L^1(\T)$. In this case, we have the rank of $B$ is infinite, that is $\dim \HD = \infty$.
\end{example}
The following result is a main theorem in this section.
\begin{theorem}\label{matematrix}
Let $B \in \HS(\HD, \C)$ be such that $\log (1- BB^*) \in L^1(\T)$, and $a$ the mate of $B$. Then there exists an analytic outer function $A \in \HS(\HD, \HD)$ such that
\begin{align}\label{schurouter}
B(z)^*B(z) + A(z)^*A(z) = I_\HD, a.e. ~z \in \T.
\end{align}
Furthermore, there is an isometric embedding $J: \HH(B) \rightarrow H^2 \oplus H^2(\HD)$ satisfying the following properties.
\begin{itemize}
\item[(i)] A function $f \in H^2$ is contained in $\HH(B)$ if and only if there exists a unique $f^+ \in H^2(\HD)$ such that
$$B^* f + A^* f^+ \in \overline{H^2_0(\HD)}.$$
If this is the case, then $Jf = (f, f^+)$.
\item[(ii)] If $Jf = (f, f^+)$, then $JLf = (Lf, Lf^+)$. Consequently, $\|L^nf\|_{\HH(B)} \rightarrow 0$ as $n \rightarrow \infty$.
\item[(iii)] The orthogonal complement of $J\HH(B)$ in $H^2 \oplus H^2(\HD)$ is
$$[J\HH(B)]^\perp = \{(Bh, Ah): h \in H^2(\HD)\}.$$
\end{itemize}
\end{theorem}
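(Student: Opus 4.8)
The plan is to first manufacture the outer factor $A$, then realize $\HH(B)$ as the first-coordinate projection of the orthogonal complement of the graph of the isometry built from $B$ and $A$, so that all of (i)--(iii) fall out of one complementation argument. The hard part is producing $A$. I would obtain it from an operator-valued outer (spectral) factorization of the positive $\HB(\HD)$-valued weight $W:=I_\HD-B^*B$ on $\T$. Since $B(z)^*B(z)$ is a rank-one, hence trace class, positive operator, its Fredholm determinant is $\det W = 1-BB^*=|a|^2$, and because $a$ is outer we have $\log\det W = 2\log|a|\in L^1(\T)$. This Szeg\H{o}-type condition yields a bounded outer $A$ with $A^*A=W=I_\HD-B^*B$ a.e., normalized so that $A(0)\ge 0$. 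From $A^*A\le I_\HD$ we get $A\in\HS(\HD,\HD)$, and since $\det W\ne 0$ a.e. the factor is square with $\overline{T_AH^2(\HD)}=H^2(\HD)$. The genuinely delicate point is the existence of the \emph{bounded} outer factor when $\dim\HD=\infty$; it is salvaged here by the rank-one structure of $B^*B$, which keeps $W$ a trace-class perturbation of the identity.

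With $A$ in hand, the symbol $\Theta=\binom{B}{A}\colon\HD\to\C\oplus\HD$ satisfies $\Theta^*\Theta=B^*B+A^*A=I_\HD$ a.e., so $V:=\binom{T_B}{T_A}\colon H^2(\HD)\to H^2\oplus H^2(\HD)$ is an isometry; its range is therefore closed and equals $\{(Bh,Ah):h\in H^2(\HD)\}$, while $V^*=(T_B^*\ T_A^*)$. Let $P:=I-VV^*$ be the orthogonal projection onto $\ker V^*=(\ran V)^\perp$. Granting that $J\HH(B)=\ker V^*$ (proved below), statement (iii) is then immediate: $[J\HH(B)]^\perp=(\ker V^*)^\perp=\ran V=\{(Bh,Ah):h\in H^2(\HD)\}$.

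Next let $\pi\colon H^2\oplus H^2(\HD)\to H^2$ be the first-coordinate projection and set $C:=\pi P$. Since $\pi\pi^*=I$ and $\pi V=T_B$, a short computation gives $CC^*=\pi P\pi^*=I-T_BT_B^*=D_{T_B^*}^2$, so by the operator-range identity $\HM(C)=\HM((CC^*)^{1/2})=\HM(D_{T_B^*})=\HH(B)$ with equal norms. The outerness of $A$ enters now: if $(0,g^+)\in\ker V^*$ then $g^+\perp T_AH^2(\HD)$, whence $g^+=0$; thus $\ker V^*\cap(\{0\}\oplus H^2(\HD))=\{0\}$, which forces $\ker C=\ker P=\ran V$ and $(\ker C)^\perp=\ker V^*$. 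Consequently $\pi$ restricts to a unitary from $\ker V^*$ onto $\HH(B)$, and I define $J$ to be its inverse; then $J$ is isometric, $J\HH(B)=\ker V^*$, and $Jf=(f,f^+)$. Finally $(f,f^+)\in\ker V^*$ means $V^*(f,f^+)=P_+(B^*f+A^*f^+)=0$, i.e. $B^*f+A^*f^+\in\overline{H^2_0(\HD)}$, with $f^+$ unique by the injectivity of $\pi|_{\ker V^*}$; this is exactly (i).

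For (ii), using $Lf=\bar z(f-f(0))$ on $\T$ and the anti-analyticity of $B^*$ and $A^*$, a direct Fourier-coefficient computation gives
$$B^*(Lf)+A^*(Lf^+)=\bar z\,(B^*f+A^*f^+)-\bar z\,(B^*f(0)+A^*f^+(0)),$$
and both summands lie in $\overline{H^2_0(\HD)}$: the first because $B^*f+A^*f^+$ already does, and the second because $B^*f(0)+A^*f^+(0)$ has nonpositive Fourier support, so multiplying by $\bar z$ lands it in $\overline{H^2_0(\HD)}$. Hence $(Lf,Lf^+)\in\ker V^*$, and uniqueness of the companion forces $JLf=(Lf,Lf^+)$. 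Since $J$ is isometric, $\|L^nf\|_{\HH(B)}^2=\|L^nf\|_{H^2}^2+\|L^nf^+\|_{H^2(\HD)}^2$, and both terms tend to $0$ because the backward shift is pointwise stable on $H^2$ and on $H^2(\HD)$. Thus the only step requiring real work beyond soft functional analysis is the construction of $A$ in the first paragraph.
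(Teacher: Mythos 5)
Your overall architecture coincides with the paper's: both proofs realize $J\HH(B)$ as the orthogonal complement in $H^2\oplus H^2(\HD)$ of the closed subspace $\{(Bh,Ah):h\in H^2(\HD)\}$ and recover $\HH(B)$ by projecting onto the first coordinate, with the outerness of $A$ supplying the injectivity of that projection; parts (i) and (iii) then fall out exactly as in the paper, and your computation for (ii) is the paper's verbatim. One genuine (and welcome) difference: where the paper identifies the projected space with $\HH(B)$ by exhibiting the reproducing kernel $K_\lambda(z)=\bigl(\tfrac{1-B(z)B(\lambda)^*}{1-\overline{\lambda}z},\tfrac{-A(z)B(\lambda)^*}{1-\overline{\lambda}z}\bigr)$ and checking it is orthogonal to the graph, you compute $\pi(I-VV^*)\pi^*=I-T_BT_B^*$ and invoke the operator-range identity $\HM(C)=\HM((CC^*)^{1/2})$; this is correct and arguably cleaner, since it delivers the equality of norms without any kernel computation. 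Your observation that $V$ is an isometry (because $T_\Theta^*T_\Theta=T_{\Theta^*\Theta}=I$ for the analytic symbol $\Theta$) also justifies the closedness of the graph, which the paper only asserts.

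The one place your argument is not yet a proof is the step you yourself flag as the hard part: the existence of the bounded outer $A$ with $A^*A=I_\HD-B^*B$ is deduced from "$\log$ of the Fredholm determinant is integrable" by appeal to an unspecified Szeg\H{o}-type principle, but the standard operator-valued outer factorization theorems in infinite dimensions are not stated in terms of Fredholm determinants. The gap is easily closed, and the paper shows how: since $B(z)^*B(z)$ is rank one with sole nonzero eigenvalue $B(z)B(z)^*$, one has $\|(I-B(z)^*B(z))^{-1}\|=(1-B(z)B(z)^*)^{-1}$ (the paper bounds it by $2(1-B(z)B(z)^*)^{-1}$ via the explicit inverse $I+B^*(1-BB^*)^{-1}B$), hence $\log^+\|(I-B^*B)^{-1}\|\in L^1(\T)$ while $\log^+\|I-B^*B\|=0$, and Theorem 6.14 of \cite{RR97} then produces the outer $A$. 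In this rank-one situation your determinant condition is literally equivalent to the operator-norm condition that theorem requires, so you should make that reduction explicit and cite the factorization theorem rather than leave the existence of $A$ as an assertion.
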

\begin{proof}
Since $|a(z)|^2 + B(z)B(z)^* = 1$, a.e. $z \in \T$, we have $1 - B(z)B(z)^* > 0$, a.e. $z \in \T$. So $1 - B(z)B(z)^*$ is invertible for a.e. $z \in \T$. Thus for a.e. $z \in \T$,
$$I + B(z)^*(1 - B(z)B(z)^*)^{-1}B(z)$$
is an inverse of $I - B(z)^*B(z)$. Let $B(\HD)$ be the space of bounded linear operators on $\HD$. Then
$$\|(I - B(z)^*B(z))^{-1}\|_{B(\HD)} \leq 1 + (1 - B(z)B(z)^*)^{-1} \leq 2 (1 - B(z)B(z)^*)^{-1},$$
It follows that
$$\log^+ \|(I - B(z)^*B(z))^{-1}\|_{B(\HD)} \leq \log 2 (1 - B(z)B(z)^*)^{-1}$$
and
$$\log^+ \|(I - B(z)^*B(z))^{-1}\|_{B(\HD)} \in L^1(\T).$$
It is clear that $\log^+ \|I - B(z)^*B(z)\|_{B(\HD)} = 0$. Thus by \cite[Theorem 6.14]{RR97}, there exists an analytic outer function $A \in \HS(\HD, \HD)$ such that
$$B(z)^*B(z) + A(z)^*A(z) = I_\HD, a.e. ~z \in \T.$$

Now we use the idea in \cite{AM19} to prove (i)-(iii). Let $\HK = H^2 \oplus H^2(\HD)$, $U = \{(Bh, Ah): h \in H^2(\HD)\}$. Then $U$ is closed in $\HK$. Let $P$ be the orthogonal projection from $\HK$ to $H^2$, that is $P(f,g) = f, (f,g) \in \HK$.
We claim that $P$ is one to one on $\HK \ominus U$.
In fact, if $(0,g) \perp U$, then $g \perp Ah, h \in H^2(\HD)$. Since $A$ is outer we conclude that $g = 0$.

Let $\HH_0 = P(\HK \ominus U)$ with norm
$$\|f\|_{\HH_0}^2 = \|f\|_{H^2}^2 + \|g\|_{H^2(\HD)}^2,$$
where $P(f,g) = f$. We next show that $\HH_0 = \HH(B)$, and it is enough to show that they have the same reproducing kernel. Let
$$K_\lambda(z) = \left(\frac{1-B(z)B(\lambda)^*}{1-\overline{\lambda}z}, \frac{-A(z)B(\lambda)^*}{1-\overline{\lambda}z}\right) \in \HK.$$
Then for any $(Bh, Ah) \in U$, it is not hard to verify that
\begin{align*}
\langle K_\lambda, (Bh, Ah)\rangle_\HK = 0.
\end{align*}
Thus $K_\lambda \perp U$, and
$$PK_\lambda(z) = \frac{1-B(z)B(\lambda)^*}{1-\overline{\lambda}z} \in \HH_0.$$
Let $f \in \HH_0$. Then there is a unique $g \in H^2(\HD)$ such that $P(f,g) = f$. Then $(f,g) \perp (Bh, Ah), h \in H^2(\HD)$. So $B^*f + A^* g \in \overline{H^2_0(\HD)}$. It then follows that
$$\langle f, PK_\lambda\rangle_{\HH_0} = \langle (f,g), K_\lambda\rangle_{\HK} = f(\lambda).$$
Therefore $PK_\lambda$ is the reproducing kernel of $\HH_0$ and $\HH_0 = \HH(B)$. This proves (iii).

Let $J = P^{-1}: \HH(B) = \HH_0 \rightarrow \HK \ominus U \subseteq \HK$ be $Jf = (f, f^+)$. Then $J$ is an isometry, and $J\HH(B) = \HK \ominus U$. If $f \in \HH(B)$, then there is a unique $f^+ \in H^2(\HD)$ such that $(f, f^+) \in \HK \ominus U$. So $B^* f + A^* f^+ \in \overline{H^2_0(\HD)}$. This proves (i).

Suppose $Jf = (f, f^+)$ and $JLf = (Lf, g)$. Note that
$$B^*Lf + A^*Lf^+ = \overline{z} B^*f - \overline{z} B^*(f(0)) + \overline{z}A^*f^+ - \overline{z} A^* (f^+(0)) \in \overline{H^2_0(\HD)}.$$
Thus $g = Lf^+$. It then follows that
$$JL^nf = (L^nf, L^nf^+) \rightarrow 0.$$
This proves (ii).
\end{proof}

\begin{prop}\label{shiftinv}
Let $B \in \HS(\HD, \C)$ be such that $\log (1- BB^*) \in L^1(\T)$. Suppose $A \in \HS(\HD, \HD)$ is the outer function in Theorem \ref{matematrix} satisfying (\ref{schurouter}). If $A(0)$ is invertible, then $\HH(B)$ is $M_z$-invariant.
\end{prop}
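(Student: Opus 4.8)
The plan is to apply the membership criterion of Theorem \ref{matematrix}(i) directly, exploiting the shift-compatibility already visible in part (ii). Let $f \in \HH(B)$ and write $Jf = (f, f^+)$, so that
$$h := B^* f + A^* f^+ \in \overline{H^2_0(\HD)}.$$
To prove $\HH(B)$ is $M_z$-invariant it suffices to show $zf \in \HH(B)$ (note $zf \in H^2$ since $f \in \HH(B) \subseteq H^2$), i.e. to produce some $g^+ \in H^2(\HD)$ with $B^*(zf) + A^* g^+ \in \overline{H^2_0(\HD)}$. The natural ansatz is $g^+ = z f^+ + c$ for a suitable constant vector $c \in \HD$; then $z f^+ \in H^2(\HD)$ automatically, and
$$B^*(zf) + A^* g^+ = z\big(B^* f + A^* f^+\big) + A^* c = zh + A^* c.$$

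First I would analyze the Fourier support. On $\T$ the function $h$ has only strictly negative frequencies, so $zh$ has frequencies $\leq 0$; writing $h = \sum_{k \leq -1} h_k z^k$ with $h_k \in \HD$, the frequency-zero coefficient of $zh$ is exactly $h_{-1}$. Since $A = \sum_{n \geq 0} A_n z^n \in \HS(\HD,\HD)$, on $\T$ we have $A^* = \sum_{n \geq 0} A_n^* z^{-n}$, so $A^* c$ also has frequencies $\leq 0$, with frequency-zero coefficient $A(0)^* c$. Hence $zh + A^* c$ has all frequencies $\leq 0$, and it lies in $\overline{H^2_0(\HD)}$ precisely when its single frequency-zero term vanishes, i.e. when
$$h_{-1} + A(0)^* c = 0.$$

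The crux is solving this equation, and this is exactly where the hypothesis enters: since $A(0)$ is invertible, so is $A(0)^*$, so I may take $c = -(A(0)^*)^{-1} h_{-1}$. With this choice $g^+ = z f^+ + c \in H^2(\HD)$ and $B^*(zf) + A^* g^+ \in \overline{H^2_0(\HD)}$, whence Theorem \ref{matematrix}(i) gives $zf \in \HH(B)$, proving invariance. I expect the only delicate point to be the bookkeeping that isolates the obstruction to membership as a single constant vector (the frequency-zero coefficient) and confirms that invertibility of $A(0)$ removes precisely that obstruction; everything else is routine. It is worth observing that $A(0)$ is used solely to solve for $c$, so the argument in fact shows $M_z$-invariance already holds whenever the vector $h_{-1}$ lies in $\ran A(0)^*$ for every $f \in \HH(B)$, which suggests the invertibility hypothesis is sufficient but perhaps not necessary.
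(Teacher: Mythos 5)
Your proof is correct, but it takes a different route from the paper's. You verify $zf\in\HH(B)$ directly from the membership criterion of Theorem \ref{matematrix}(i): with $h=B^*f+A^*f^+\in\overline{H^2_0(\HD)}$ you make the ansatz $(zf)^+=zf^++c$, observe that $zh+A^*c$ is supported on frequencies $\le 0$ with frequency-zero coefficient $h_{-1}+A(0)^*c$, and use invertibility of $A(0)^*$ to kill that single obstruction. All the steps check out: $h\in L^2(\T,\HD)$ so $h_{-1}\in\HD$ is well defined, $zf^++c\in H^2(\HD)$, and the criterion in (i) only requires existence of some admissible $g^+$. The paper instead argues in two steps: first it shows $B(z)x\in\HH(B)$ for each $x\in\HD$ by exhibiting $(Bx)^+=-(A(0)^*)^{-1}x+Ax$ (the same constant-correction trick, applied to $Bx$ rather than to $zf$), and then it invokes the formula $L^*f(z)=zf(z)-B(z)\tau^*(f)$ from \cite[Theorem 1]{BallBoloBasics} to write $zf=L^*f+B\tau^*(f)$ as a sum of two elements of $\HH(B)$. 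Your argument is more self-contained, since it avoids the external adjoint formula and works entirely inside the embedding $J$; the paper's argument yields the byproduct $B\HD\subseteq\HH(B)$, which reappears later (e.g.\ in Lemma \ref{backforwshift}). Your closing remark that invertibility of $A(0)$ is only used to guarantee $h_{-1}\in\ran A(0)^*$ is a fair observation about where the hypothesis enters, though one would still need $c$ to be chosen consistently (and the resulting $g^+$ to exist in $H^2(\HD)$) for every $f$, so it does not immediately weaken the hypothesis.
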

\begin{proof}
Let $x \in \HD$. Then
\begin{align*}
& B(z)^*B(z)x + A(z)^*(-(A(0)^*)^{-1}x + A(z)x)\\
& = x - A(z)^* (A(0)^*)^{-1}x \in \overline{H^2_0(\HD)}.
\end{align*}
Thus $B(z) x \in \HH(B)$.

Let $\tau: \HD \rightarrow \HH(B)$ be defined by $\tau (u) = LB u, u \in \HD$.
Then $\tau$ is bounded and by \cite[Theorem 1]{BallBoloBasics}, we have
$$L^* f(z) = zf(z) - B(z)\tau^*(f), f \in \HH(B).$$
Therefore $zf \in \HH(B)$ whenever $f \in \HH(B)$ and so the conclusion holds.
\end{proof}
If $B \in \HS(\HD, \C)$ is such that $\HH(B)$ is $M_z$-invariant, in general we may not have $\log (1- BB^*) \in L^1(\T)$, see e.g. \cite[Example 4.4]{LGR}.
The following result is contained in \cite[Theorem 5.2]{AM19}.
\begin{corollary}[\cite{AM19}]\label{invariantmz}
Let $B \in \HS(\HD, \C)$ be such that $\log (1- BB^*) \in L^1(\T)$. If $\HD$ is finite dimensional, then $\HH(B)$ is $M_z$-invariant.
\end{corollary}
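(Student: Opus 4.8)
The plan is to verify the single hypothesis of Proposition \ref{shiftinv}, namely that $A(0)$ is invertible, where $A \in \HS(\HD,\HD)$ is the outer function produced by Theorem \ref{matematrix}; the $M_z$-invariance of $\HH(B)$ then follows immediately. Since $\HD$ is finite dimensional, $A$ is a square matrix-valued function, so it suffices to prove that $A(0)$ has trivial kernel.

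First I would check that $A$ has full rank, i.e. that $A(z)$ is invertible for a.e. $z \in \T$. The mate $a$ is outer with $\log(1-BB^*) = \log|a|^2 \in L^1(\T)$, so $|a(z)| > 0$, and hence $B(z)B(z)^* = 1 - |a(z)|^2 < 1$, for a.e. $z \in \T$. As $B(z)$ takes values in $\HB(\HD,\C)$, the operator $B(z)^*B(z)$ has rank at most one and $\|B(z)^*B(z)\| = B(z)B(z)^* < 1$ a.e.; consequently $A(z)^*A(z) = I_\HD - B(z)^*B(z)$ is invertible for a.e. $z \in \T$. In particular $A(z)$ is invertible a.e., so $A$ is a full outer function and $AH^2(\HD)$ is dense in $H^2(\HD)$.

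Next I would show $A(0)$ is invertible by a pairing argument against constants. Suppose $x \in \HD$ satisfies $A(0)^*x = 0$, and regard $x$ as a constant function in $H^2(\HD)$. For every $h \in H^2(\HD)$,
$$\langle x, Ah\rangle_{H^2(\HD)} = \langle x, A(0)h(0)\rangle_{\HD} = \langle A(0)^*x, h(0)\rangle_{\HD} = 0,$$
so $x$ is orthogonal to the dense subspace $AH^2(\HD)$ of $H^2(\HD)$, forcing $x = 0$. Thus $A(0)^*$ is injective, and since $\HD$ is finite dimensional $A(0)$ is invertible. Proposition \ref{shiftinv} now gives that $\HH(B)$ is $M_z$-invariant.

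The only delicate point is the density of $AH^2(\HD)$ in all of $H^2(\HD)$, as opposed to merely a closed proper subspace. This is where finite dimensionality together with the full-rank computation is used: the a.e. invertibility of $A(z)$ guarantees that the closure of the range of the outer factor is the whole space $H^2(\HD)$, rather than $H^2$ of a proper subspace of $\HD$. One could alternatively deduce the invertibility of $A(0)$ from the fact that, for square outer $A$, the scalar function $\det A$ is outer and hence zero-free on $\D$, but the pairing argument above avoids appealing to the determinantal theory of matrix-valued inner-outer factorization.
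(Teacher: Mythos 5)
Your proposal is correct and follows essentially the same route as the paper: both reduce the corollary to Proposition \ref{shiftinv} by showing that the outer function $A$ from Theorem \ref{matematrix} has $A(0)$ invertible. The only difference is that the paper simply asserts this invertibility from finite dimensionality, whereas you substantiate it -- correctly -- by noting that $A(z)^*A(z)=I_{\HD}-B(z)^*B(z)$ is invertible a.e.\ on $\T$ (so the outer function $A$ is full-range, i.e.\ $AH^2(\HD)$ is dense in $H^2(\HD)$) and then running the pairing argument $\langle x, Ah\rangle_{H^2(\HD)}=\langle A(0)^*x, h(0)\rangle_{\HD}$ to conclude that $A(0)^*$ is injective, hence $A(0)$ is invertible in finite dimensions.
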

\begin{proof}
By Theorem \ref{matematrix}, there is an outer function $A \in \HS(\HD, \HD)$ such that
\begin{align*}
B(z)^*B(z) + A(z)^*A(z) = I_\HD, a.e. ~z \in \T.
\end{align*}
Since $\HD$ is finite dimensional, we have $A(0)$ is invertible. Thus the conclusion follows from Proposition \ref{shiftinv}.
\end{proof}

\subsection{Polynomial density}
When $B \in \HS(\HD, \C)$ has a mate, we show that the $\HH(B)$ space has many similar properties as the sub-Hardy space $\HH(b)$, where $b \in H^\infty_1$ has a mate, and we can use some similar ideas in \cite{Sa86}.
\begin{lemma}\label{containment}
Let $B \in \HS(\HD, \C)$ be such that $\log (1- BB^*) \in L^1(\T)$, and $a$ the mate of $B$. Then
$$\HM(a) \subseteq \HM(\overline{a}) \subseteq \HH(B),$$
and the inclusions are contractive. Consequently, $\HH(B)$ contains all the polynomials.
\end{lemma}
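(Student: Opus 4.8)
The plan is to derive both inclusions from the standard operator-range comparison principle: for bounded operators $X,Y$ into a common space, $\HM(X)\subseteq\HM(Y)$ with \emph{contractive} inclusion precisely when $XX^{*}\le YY^{*}$ (Douglas factorization $X=YC$ with $\|C\|\le 1$, combined with the definition of the range norm). Thus I would reduce the whole first line of the lemma to two operator inequalities between Toeplitz operators, and then treat the density-of-polynomials statement as a separate, more hands-on argument. Here $\HH(B)=\HM(D_{T_B^*})$ with $D_{T_B^*}^2=I-T_BT_B^{*}$, so the target inclusions become comparisons against $T_{\overline a}T_{\overline a}^{*}$ and $D_{T_B^*}^2$.

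First I would record the identities $T_{\overline a}T_{\overline a}^{*}=T_{\overline a}T_a=T_{|a|^2}$ and, since $|a|^2+BB^*=1$ a.e.\ on $\T$, $T_{|a|^2}=I-T_{BB^*}$. For $\HM(a)\subseteq\HM(\overline a)$ I would estimate, with $P_+$ the orthogonal projection of $L^2$ onto $H^2$, that $\langle T_aT_a^{*}f,f\rangle=\|T_{\overline a}f\|^2=\|P_+(\overline a f)\|^2\le\|\overline a f\|_{L^2}^2=\langle T_{|a|^2}f,f\rangle$, giving $T_aT_a^{*}\le T_{\overline a}T_{\overline a}^{*}$. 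For $\HM(\overline a)\subseteq\HH(B)$ the same idea applied to the (operator-valued, row) symbol $B$ gives $\langle T_BT_B^{*}f,f\rangle=\|P_+(B^*f)\|^2\le\|B^*f\|_{L^2(\HD)}^2=\langle T_{BB^*}f,f\rangle$, hence $T_BT_B^{*}\le T_{BB^*}$; rearranging with the identity above yields $T_{\overline a}T_{\overline a}^{*}=T_{|a|^2}=I-T_{BB^*}\le I-T_BT_B^{*}=D_{T_B^*}^2$, which is exactly the needed comparison. Both estimates are essentially scalar (they only use that $BB^*$ is the scalar symbol $1-|a|^2$), so the possibly infinite-dimensional $\HD$ and the row nature of $B$ cause no trouble at this stage.

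For the final assertion I would first put the constant function into $\HM(\overline a)=\ran T_{\overline a}$: since $a(0)>0$, a direct check gives $T_{\overline a}\bigl(a(0)^{-1}\bigr)=a(0)^{-1}P_+(\overline a)=a(0)^{-1}\overline{a(0)}=1$, so $1\in\HM(\overline a)$. I would then show $\HM(\overline a)$ is invariant under $M_z$. The key is the commutation relation $T_{\overline a}(zg)=P_+(z\overline a g)=z\,P_+(\overline a g)+c_g$, where $c_g$ is the constant equal to the $z^{-1}$-Fourier coefficient of $\overline a g$; consequently $z\,T_{\overline a}g=T_{\overline a}(zg)-c_g\cdot 1\in\ran T_{\overline a}$, since both terms on the right lie in $\ran T_{\overline a}$ (the second because $1\in\HM(\overline a)$). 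Starting from $1$ and iterating $M_z$ then places every monomial $z^k$, hence every polynomial, in $\HM(\overline a)\subseteq\HH(B)$.

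The two inclusions are routine once the comparison principle is invoked, so I expect the main obstacle to be the density statement, whose crux is the $M_z$-invariance of $\HM(\overline a)$. The delicate point is that multiplication by $z$ followed by the Riesz projection does not land in $\ran T_{\overline a}$ on the nose but only up to an additive constant, and it is precisely the prior fact $1\in\HM(\overline a)$ that absorbs this constant and closes the argument. I would also verify at the outset that the range norm on $\HM(\overline a)$ is unambiguous, which uses that $a$ is outer so that $T_{\overline a}=T_a^{*}$ is injective and the range norm is simply the pulled-back $H^2$ norm; this legitimizes all the operator-range identities employed above.
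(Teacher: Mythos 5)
Your proposal is correct and follows essentially the same route as the paper: both inclusions are reduced via the Douglas range-inclusion criterion to the operator inequalities $T_aT_{\overline a}\le T_{\overline a}T_a$ and $T_{\overline a}T_a=I-T_{BB^*}\le I-T_BT_B^*$, and the polynomial statement is obtained from the fact that $\ran T_{\overline a}$ contains all polynomials. The only difference is one of detail: you verify by direct computation the two inequalities (which the paper attributes to subnormality of $T_a$ and to an easy check) and you supply a self-contained proof, via $1\in\ran T_{\overline a}$ and the commutator identity for $M_z$, of the standard fact about polynomials that the paper simply cites as known.
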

\begin{proof}
By Douglas lemma (\cite{Do66}, see also \cite[Theorem 16.7]{FM162}), $\HM(a) \subseteq \HM(\overline{a})$ contractively if and only if
$$T_a T_{\overline{a}} \leq T_{\overline{a}} T_a,$$
which is true since $T_a$ is subnormal.

Also by Douglas lemma, $\HM(\overline{a}) \subseteq \HH(B)$ contractively if and only if
$$T_{\overline{a}} T_a \leq I - T_B T_B^*.$$
Note that
$$T_{\overline{a}} T_a = T_{|a|^2} = T_{1-BB^*} = I - T_{BB^*},$$
and it is not hard to check that
$$T_B T_B^* \leq T_{BB^*}.$$
We conclude that $T_{\overline{a}} T_a \leq I - T_B T_B^*$ and $\HM(\overline{a}) \subseteq \HH(B)$ contractively.

It is known that if $a$ is an outer function, then $T_{\overline{a}}$ is one to one, and $\ran T_{\overline{a}}$ contains all the polynomials. Thus $\HH(B)$ contains all the polynomials.
\end{proof}
If $\varphi \in H^\infty$, then $\varphi$ is also a multiplier of $H^2(\HD)$ with norm $\|\varphi\|_\infty$.
\begin{lemma}\label{contractionhbs}
Let $B \in \HS(\HD, \C)$ and $\varphi \in H^\infty_1$. Then $T_{\overline{\varphi}}$ is a contractive operator from $\HH(B^*)$ to $\HH(B^*)$.
\end{lemma}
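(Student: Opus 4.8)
The plan is to realize $\HH(B^*)=\HH(T_B^*)$ concretely as the operator range $\HM(D)$, where $D=(I-T_B^*T_B)^{1/2}$ acts on $H^2(\HD)$, carrying the range norm $\|Du\|_{\HH(B^*)}=\|Pu\|$ with $P$ the projection onto $\overline{\ran D}$. Since $\varphi\in H^\infty_1$ is scalar, it is a multiplier of norm $\le 1$ of both $H^2$ and $H^2(\HD)$, so $T_{\overline{\varphi}}$ is a contraction there; moreover multiplication by the scalar $\varphi$ commutes with multiplication by the row function $B$, giving $T_BT_\varphi=T_\varphi T_B$, and taking adjoints yields the intertwining $T_{\overline{\varphi}}T_B^*=T_B^*T_{\overline{\varphi}}$.

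First I would reduce the claim to a single operator inequality. By the range-norm description of $\HM(D)$ together with Douglas's lemma, $T_{\overline{\varphi}}$ maps $\HM(D)$ contractively into itself as soon as there is a contraction $W$ on $H^2(\HD)$ with $T_{\overline{\varphi}}D=DW$: writing $f=Du$ with $\|u\|=\|f\|_{\HH(B^*)}$ gives $T_{\overline{\varphi}}f=D(Wu)\in\HM(D)$ and $\|T_{\overline{\varphi}}f\|_{\HH(B^*)}\le\|Wu\|\le\|u\|$. By Douglas's lemma the existence of such a $W$ is equivalent to $T_{\overline{\varphi}}D^2T_\varphi\le D^2$, i.e.
$$T_{\overline{\varphi}}(I-T_B^*T_B)T_\varphi\le I-T_B^*T_B.$$

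Next I would simplify this using the commutation relations. Because $\varphi$ is scalar, $T_{\overline{\varphi}}T_\varphi=T_{|\varphi|^2}$ on both $H^2$ and $H^2(\HD)$, and the intertwining above gives $T_{\overline{\varphi}}T_B^*T_BT_\varphi=T_B^*T_{|\varphi|^2}T_B$. Substituting, the desired inequality becomes
$$T_B^*(I-T_{|\varphi|^2})T_B\le I-T_{|\varphi|^2},\qquad\text{i.e.}\qquad T_B^*T_{1-|\varphi|^2}T_B\le T_{1-|\varphi|^2},$$
where on the left the inner Toeplitz operator acts on $H^2$ and on the right it acts on $H^2(\HD)$.

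Finally I would verify this last inequality pointwise on $\T$. For $h\in H^2(\HD)$, since $B$ is analytic $T_Bh=Bh\in H^2$, so testing against $h$ turns both sides into boundary integrals, $\langle T_B^*T_{1-|\varphi|^2}T_Bh,h\rangle=\int_\T(1-|\varphi|^2)|Bh|^2\,dm$ and $\langle T_{1-|\varphi|^2}h,h\rangle=\int_\T(1-|\varphi|^2)|h|^2_\HD\,dm$. As $\|\varphi\|_\infty\le 1$ the weight $1-|\varphi|^2$ is nonnegative a.e., and as $B(z)$ is contractive we have $|B(z)h(z)|\le|h(z)|_\HD$ a.e.; the integral inequality follows at once, which gives the operator inequality and hence the lemma. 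I expect the analytic content to be trivial — it is just the pointwise contractivity of $B$ — so the step needing the most care will be the conceptual reduction: correctly applying the range-norm/Douglas argument and the Brown--Halmos-type identities $T_{\overline{\varphi}}T_\varphi=T_{|\varphi|^2}$ and $T_BT_\varphi=T_\varphi T_B$ valid for the \emph{scalar} multiplier $\varphi$, keeping track of which Toeplitz operators act on $H^2$ versus $H^2(\HD)$.
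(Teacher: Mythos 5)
Your proposal is correct and follows essentially the same route as the paper: both reduce the claim via the operator-range/Douglas criterion to the inequality $T_{\overline{\varphi}}(I-T_B^*T_B)T_\varphi \leq I-T_B^*T_B$, and both then use the intertwining of the scalar multiplier $\varphi$ with $T_B$ to rewrite the difference as a Toeplitz operator with the nonnegative symbol $(I-B^*B)(1-|\varphi|^2)$. Your explicit boundary-integral verification of the final positivity is just an unwound version of the paper's observation that this Toeplitz operator is positive.
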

\begin{proof}
Note that $\HH(B^*)$ is the range space of $(I-T_B^*T_B)^{1/2}$. Thus $T_{\overline{\varphi}}: \HH(B^*) \rightarrow \HH(B^*)$ is a contraction if and only if
$$T_{\overline{\varphi}} (I-T_B^*T_B) T_\varphi \leq I-T_B^*T_B,$$
see e.g. \cite[Corollary 16.10]{FM162}.
By
\begin{align*}
I-T_B^*T_B - T_{\overline{\varphi}} (I-T_B^*T_B) T_\varphi & = I - T_{|\varphi|^2} - T_B^* (I - T_{|\varphi|^2}) T_B\\
& = T_{(I-B^*B)(1-|\varphi|^2)} \geq 0,
\end{align*}
we conclude that $T_{\overline{\varphi}}: \HH(B^*) \rightarrow \HH(B^*)$ is a contraction.
\end{proof}
The following result is known.
\begin{lemma}[\cite{Sa94}]\label{relationhb}
Suppose $A$ is a contraction from a Hilbert space $\HH$ to a Hilbert space $\HK$. Then $f \in \HH(A)$ if and only if $A^*f \in \HH(A^*)$. Furthermore,
$$\|f\|_{\HH(A)}^2 = \|f\|_{\HK}^2 + \|A^*f\|_{\HH(A^*)}^2, f \in \HH(A).$$
\end{lemma}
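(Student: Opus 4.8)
The plan is to work directly from the operator-range definitions $\HH(A) = \HM(D_{A^*})$ and $\HH(A^*) = \HM(D_A)$, where $D_{A^*} = (I - AA^*)^{1/2}$ and $D_A = (I - A^*A)^{1/2}$. The coisometric structure of these spaces means that every $f \in \HH(A)$ can be written $f = D_{A^*} u$ with $u \in (\ker D_{A^*})^\perp = \overline{\ran D_{A^*}}$, and for this minimal preimage one has $\|f\|_{\HH(A)} = \|u\|$; the analogous statement holds for $\HH(A^*)$. The engine of the proof will be the intertwining identities $A D_A = D_{A^*} A$ and $A^* D_{A^*} = D_A A^*$, which follow by applying the functional calculus to $A^*A$ and $AA^*$ in the relation $A(A^*A)^n = (AA^*)^n A$.

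For the forward implication and the norm formula, I would take $f \in \HH(A)$, write $f = D_{A^*} u$ with $u \in \overline{\ran D_{A^*}}$, and compute $A^* f = A^* D_{A^*} u = D_A (A^* u)$, which already exhibits $A^* f \in \ran D_A = \HH(A^*)$. To read off the norm I must verify that $A^* u$ is itself the minimal preimage, i.e.\ that $A^* u \perp \ker D_A$. If $x \in \ker D_A$, then $A^*A x = x$, whence $D_{A^*}^2 (Ax) = (I - AA^*)Ax = Ax - A(A^*A x) = 0$ and therefore $Ax \in \ker D_{A^*}$; consequently $\langle A^* u, x\rangle = \langle u, Ax\rangle = 0$ since $u \perp \ker D_{A^*}$. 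Hence $\|A^* f\|_{\HH(A^*)} = \|A^* u\|$, and combining this with $\|f\|_\HK^2 = \|D_{A^*}u\|^2 = \langle (I-AA^*)u, u\rangle = \|u\|^2 - \|A^* u\|^2$ gives $\|f\|_{\HH(A)}^2 = \|u\|^2 = \|f\|_\HK^2 + \|A^* f\|_{\HH(A^*)}^2$.

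For the converse, suppose $f \in \HK$ satisfies $A^* f \in \HH(A^*) = \ran D_A$, and choose any $v$ with $D_A v = A^* f$. Setting $u := D_{A^*} f + A v$ and using the intertwining identity $D_{A^*} A = A D_A$, I would compute
$$D_{A^*} u = (I - AA^*) f + A D_A v = (I - AA^*) f + A A^* f = f,$$
so that $f = D_{A^*} u \in \ran D_{A^*} = \HH(A)$. The only genuine subtlety lies in the operator-range bookkeeping: identifying the $\HH(A)$- and $\HH(A^*)$-norms with the norms of minimal preimages, and verifying via $A(\ker D_A) \subseteq \ker D_{A^*}$ that $A^*$ carries minimal preimages to minimal preimages. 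Once the two intertwining identities are established, the remaining manipulations are purely algebraic.
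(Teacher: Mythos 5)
Your proof is correct. Note that the paper itself offers no argument for this lemma---it is quoted as a known result from \cite{Sa94}---so there is no in-paper proof to match; what you have written is a complete, self-contained derivation from the operator-range definitions. The two points that genuinely require verification are exactly the ones you isolate and check: first, that $A(\ker D_A)\subseteq \ker D_{A^*}$, so that the minimal preimage $u\in(\ker D_{A^*})^\perp$ of $f$ is carried by $A^*$ to the minimal preimage $A^*u\in(\ker D_A)^\perp$ of $A^*f$, which is what legitimizes reading off $\|A^*f\|_{\HH(A^*)}=\|A^*u\|$ and hence the norm identity via $\|D_{A^*}u\|^2=\|u\|^2-\|A^*u\|^2$; and second, the explicit preimage $u=D_{A^*}f+Av$ in the converse, where the intertwining $D_{A^*}A=AD_A$ (obtained from the functional calculus, since $\sqrt{t}$ is a uniform limit of polynomials on $[0,1]$) gives $D_{A^*}u=f$ and hence $f\in\ran D_{A^*}=\HH(A)$. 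For comparison, the usual textbook route (as in \cite{Sa94}) goes through the variational characterization $\|f\|_{\HH(A)}^2=\sup_{g\in\HH}\bigl(\|f+Ag\|_{\HK}^2-\|g\|_{\HH}^2\bigr)$, which sidesteps the choice of preimages entirely; your approach is more hands-on but equally rigorous, and it makes the mechanism behind the Pythagorean-type identity more transparent.
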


\begin{lemma}\label{contractionhbs2}
Let $B \in \HS(\HD, \C)$ and $\varphi \in H^\infty_1$. Then $T_{\overline{\varphi}}$ is a contractive operator from $\HH(B)$ to $\HH(B)$.
\end{lemma}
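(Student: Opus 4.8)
The plan is to reduce the statement to Lemma \ref{contractionhbs}, which already treats $\HH(B^*)$, by gluing it to the identity in Lemma \ref{relationhb} applied to the contraction $A = T_B : H^2(\HD) \to H^2$. For this $A$ we have $\HH(A) = \HH(B) \subseteq H^2$ and $\HH(A^*) = \HH(B^*) \subseteq H^2(\HD)$, so Lemma \ref{relationhb} reads: $f \in \HH(B)$ if and only if $T_B^* f \in \HH(B^*)$, and
$$\|f\|_{\HH(B)}^2 = \|f\|_{H^2}^2 + \|T_B^* f\|_{\HH(B^*)}^2.$$

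The key observation is the intertwining relation $T_B^* T_{\overline{\varphi}} = T_{\overline{\varphi}} T_B^*$. Since $\varphi$ is scalar we have $B\varphi = \varphi B$ pointwise, so the analytic Toeplitz operators commute, $T_B T_\varphi = T_\varphi T_B$, where on the left $T_\varphi$ acts on $H^2(\HD)$ and on the right on $H^2$. Taking adjoints yields $T_{\overline{\varphi}} T_B^* = T_B^* T_{\overline{\varphi}}$, with the $T_{\overline{\varphi}}$ on the left acting on $H^2(\HD)$ and the one on the right on $H^2$. The one point requiring care here is keeping straight on which space each copy of $T_{\overline{\varphi}}$ acts.

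With these facts in hand the argument runs as follows. Take $f \in \HH(B)$; then $T_B^* f \in \HH(B^*)$ by Lemma \ref{relationhb}. Using the intertwining relation, $T_B^*(T_{\overline{\varphi}} f) = T_{\overline{\varphi}}(T_B^* f)$, and by Lemma \ref{contractionhbs} the right-hand side lies in $\HH(B^*)$ with $\|T_{\overline{\varphi}}(T_B^* f)\|_{\HH(B^*)} \leq \|T_B^* f\|_{\HH(B^*)}$. Hence $T_B^*(T_{\overline{\varphi}} f) \in \HH(B^*)$, so the converse direction of Lemma \ref{relationhb} gives $T_{\overline{\varphi}} f \in \HH(B)$. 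Finally I would compute
$$\|T_{\overline{\varphi}} f\|_{\HH(B)}^2 = \|T_{\overline{\varphi}} f\|_{H^2}^2 + \|T_B^* T_{\overline{\varphi}} f\|_{\HH(B^*)}^2 \leq \|f\|_{H^2}^2 + \|T_B^* f\|_{\HH(B^*)}^2 = \|f\|_{\HH(B)}^2,$$
where the middle inequality uses that $T_{\overline{\varphi}}$ is a contraction on $H^2$ (as $\|\varphi\|_\infty \leq 1$) for the first term and Lemma \ref{contractionhbs} for the second.

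I do not anticipate a genuine obstacle: the argument is essentially bookkeeping of the two already-established contractivity facts, and the scalar-ness of $\varphi$ is exactly what makes the commutation $T_B T_\varphi = T_\varphi T_B$ clean. The only place to be attentive is the identification of the ambient spaces through Sarason's identity, after which everything is a direct appeal to the preceding lemmas.
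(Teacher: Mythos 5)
Your argument is correct and coincides with the paper's own first proof (``Method one''), which uses exactly the same intertwining $T_B^* T_{\overline{\varphi}} = T_{\overline{\varphi}} T_B^*$ together with Lemmas \ref{contractionhbs} and \ref{relationhb} and the same norm computation. The paper also records a second proof via von Neumann's inequality and weak convergence, but your route is the one the authors lead with.
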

\begin{proof}
We present two proofs here.
Method one: If $f \in \HH(B)$, then $T_B^* f\in \HH(B^*)$. By Lemma \ref{contractionhbs},
$$T_B^* T_{\overline{\varphi}} f = T_{\overline{\varphi}} T_B^* f \in \HH(B^*).$$
Thus $T_{\overline{\varphi}} f \in \HH(B)$. Note that
\begin{align*}
\|T_{\overline{\varphi}} f\|_{\HH(B)}^2& = \|T_{\overline{\varphi}} f\|_{H^2}^2 + \|T_B^*T_{\overline{\varphi}} f\|_{\HH(B^*)}^2\\
& \leq \| f\|_{H^2}^2 + \|T_B^* f\|_{\HH(B^*)}^2 = \| f\|_{\HH(B)}^2.
\end{align*}
So $T_{\overline{\varphi}}: \HH(B) \rightarrow \HH(B)$ is a contraction.

Method two: Recall that $\HH(B) \subseteq H^2$ and $T_{\overline{z}} = L: \HH(B) \rightarrow \HH(B)$ is a contraction (\cite{BallBoloBasics}). Then by von Neumann's inequality, for any polynomial $p$ we have
$$\|p(L)\|_{B(\HH(B))} \leq \|p\|_{H^\infty}.$$
Thus $\|T_{\overline{p}}\|_{B(\HH(B))} \leq \|p\|_{H^\infty}$. Let $\varphi \in H^\infty_1$. Then there are polynomials $p_n$ such that $\|p_n\|_\infty \leq \|\varphi\|_\infty$ and $p_n(z) \rightarrow \varphi(z), z \in \D$ as $n \rightarrow \infty$. It follows that $T_{\overline{p_n}} \rightarrow T_{\overline{\varphi}}$ in the weak operator topology of $B(H^2)$. So for any $f \in \HH(B)$, $T_{\overline{p_n}} f \rightarrow T_{\overline{\varphi}} f$ weakly in $H^2$. Since
$$\|T_{\overline{p_n}}\|_{B(\HH(B))} \leq \|p_n\|_{H^\infty} \leq \|\varphi\|_\infty,$$
we also have $T_{\overline{p_n}} f$ converges weakly in $\HH(B)$. Therefore $T_{\overline{p_n}} f \rightarrow T_{\overline{\varphi}} f$ weakly in $\HH(B)$. Hence $T_{\overline{\varphi}} f \in \HH(B)$ and $\|T_{\overline{\varphi}} f\|_{B(\HH(B))} \leq \| f\|_{\HH(B)}$.
\end{proof}

\begin{lemma}\label{mathbackinv}
Let $B \in \HS(\HD, \C)$ be such that $\log (1- BB^*) \in L^1(\T)$. Suppose $A \in \HS(\HD, \HD)$ is the outer function in Theorem \ref{matematrix} satisfying (\ref{schurouter}). Let $\varphi \in H^\infty$, then $(T_{\overline{\varphi}}f)^+  = T_{\overline{\varphi}}f^+$ and
$$\|T_{\overline{\varphi}}f\|_{\HH(B)}^2 = \|T_{\overline{\varphi}}f\|_{H^2}^2 + \|T_{\overline{\varphi}}f^+\|_{H^2(\HD)}^2, f \in \HH(B),$$
where $f^+ \in H^2(\HD)$ is the unique function such that $B^*f + A^* f^+ = 0$.
\end{lemma}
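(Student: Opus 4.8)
The plan is to route everything through the isometric embedding $J$ of Theorem \ref{matematrix} rather than estimating de Branges--Rovnyak norms directly. Writing $Jf = (f, f^+)$, the single identity I would aim to establish is
$$J(T_{\overline{\varphi}}f) = (T_{\overline{\varphi}}f, T_{\overline{\varphi}}f^+);$$
once this is in hand, all assertions of the lemma drop out simultaneously. Indeed, $J$ is isometric onto $\HK \ominus U$ (with $\HK = H^2 \oplus H^2(\HD)$ and $U = \{(Bh, Ah): h \in H^2(\HD)\}$ as in the proof of Theorem \ref{matematrix}), and the projection $P$ onto the first coordinate inverts $J$ there. So membership of $(T_{\overline{\varphi}}f, T_{\overline{\varphi}}f^+)$ in $\HK \ominus U$ forces $T_{\overline{\varphi}}f \in \HH(B)$, identifies $(T_{\overline{\varphi}}f)^+$ with $T_{\overline{\varphi}}f^+$ by the uniqueness of the second coordinate (which holds because $P$ is injective on $\HK \ominus U$, $A$ being outer), and yields the norm formula by reading off $\|J(T_{\overline{\varphi}}f)\|_{\HK}^2 = \|T_{\overline{\varphi}}f\|_{H^2}^2 + \|T_{\overline{\varphi}}f^+\|_{H^2(\HD)}^2$.

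Thus the whole proof reduces to checking that $(T_{\overline{\varphi}}f, T_{\overline{\varphi}}f^+) \perp U$, i.e. that for every $h \in H^2(\HD)$,
$$\langle T_{\overline{\varphi}}f, Bh\rangle_{H^2} + \langle T_{\overline{\varphi}}f^+, Ah\rangle_{H^2(\HD)} = 0.$$
Here I would use that $T_{\overline{\varphi}} = P_+ M_{\overline{\varphi}}$ and that $Bh$ and $Ah$ already lie in the relevant $H^2$-spaces, so the projection can be absorbed against them: $\langle P_+(\overline{\varphi}f), Bh\rangle = \langle \overline{\varphi}f, Bh\rangle$, and likewise in the second slot. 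Moving $\overline{\varphi}$ back across and using that $\varphi$ is scalar (so $\varphi Bh = B(\varphi h)$ and $\varphi Ah = A(\varphi h)$), the left side becomes
$$\langle f, B(\varphi h)\rangle_{H^2} + \langle f^+, A(\varphi h)\rangle_{H^2(\HD)}.$$
Since $\varphi \in H^\infty$, the element $\varphi h$ again lies in $H^2(\HD)$, whence $(B(\varphi h), A(\varphi h)) \in U$; and $(f, f^+) = Jf$ is orthogonal to $U$ by construction. Hence the displayed sum vanishes, completing the reduction.

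The point requiring care is exactly the interplay between the compressed (Toeplitz) operator $T_{\overline{\varphi}}$ and the full multiplications hidden in the defining relation $B^*f + A^*f^+ \in \overline{H^2_0(\HD)}$ of Theorem \ref{matematrix}(i): a naive attempt to verify $B^*(T_{\overline{\varphi}}f) + A^*(T_{\overline{\varphi}}f^+) \in \overline{H^2_0(\HD)}$ directly runs into the fact that $M_{\overline{\varphi}}$ does not commute past $P_+$. The orthogonality computation above sidesteps this precisely because it only ever pairs against genuine $H^2$-functions $Bh$ and $Ah$, which lets the self-adjointness of $P_+$ discard the projection cleanly; the one substantive input is that multiplication by the scalar $\varphi \in H^\infty$ preserves $H^2(\HD)$ and commutes with the operator-valued symbols $B$ and $A$, keeping $\varphi h$ a legitimate test vector for $U$. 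I expect no further obstacle beyond bookkeeping the pairings.
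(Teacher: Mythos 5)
Your argument is correct, and it takes a route that is dual to, but genuinely distinct from, the paper's. The paper first invokes Lemma \ref{contractionhbs2} to know that $T_{\overline{\varphi}}f$ lies in $\HH(B)$ at all, then appeals to Theorem \ref{matematrix}(i) for the existence of $(T_{\overline{\varphi}}f)^+$, and finally identifies it with $T_{\overline{\varphi}}f^+$ by the commutation of co-analytic Toeplitz operators: $T_B^*T_{\overline{\varphi}}f + T_A^*T_{\overline{\varphi}}f^+ = T_{\overline{\varphi}}(T_B^*f + T_A^*f^+) = 0$, plus the uniqueness of the plus-function. You instead verify directly that $(T_{\overline{\varphi}}f, T_{\overline{\varphi}}f^+)$ is orthogonal to $U = \{(Bh,Ah): h \in H^2(\HD)\}$ by pairing against $Bh$ and $Ah$, absorbing $P_+$ by self-adjointness, and using that $\varphi h$ is again a test vector for $U$; this is exactly the orthogonality reformulation of the Toeplitz identity $T_B^*g + T_A^*g^+ = 0$, so the two computations carry the same content. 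What your version buys is self-containedness: membership of $T_{\overline{\varphi}}f$ in $\HH(B)$ falls out of the orthogonality rather than being imported from Lemma \ref{contractionhbs2}, and the norm identity even re-derives the contractivity statement of that lemma as a byproduct (since $\|P_+(\overline{\varphi}f)\|^2 + \|P_+(\overline{\varphi}f^+)\|^2 \leq \|\varphi\|_\infty^2\|f\|_{\HH(B)}^2$). What the paper's version buys is brevity, given that Lemma \ref{contractionhbs2} has already been established. Your closing remark about why a naive verification of $B^*(T_{\overline{\varphi}}f) + A^*(T_{\overline{\varphi}}f^+) \in \overline{H^2_0(\HD)}$ is delicate is well taken, though note the paper's Toeplitz-commutation step handles the same issue cleanly on the operator side.
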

\begin{proof}
By Lemma \ref{contractionhbs2}, $T_{\overline{\varphi}}f \in \HH(B)$. Then by Theorem \ref{matematrix}, there is $(T_{\overline{\varphi}}f)^+ \in H^2(\HD)$ such that
$$T_B^* T_{\overline{\varphi}}f + T_A^* (T_{\overline{\varphi}}f)^+ = 0.$$
Since $T_B^* f + T_A^* f^+ = 0$, we have
$$T_B^* T_{\overline{\varphi}}f + T_A^* T_{\overline{\varphi}}f^+ = T_{\overline{\varphi}} T_B^* f + T_{\overline{\varphi}} T_A^* f^+ = 0.$$
Thus $(T_{\overline{\varphi}}f)^+  = T_{\overline{\varphi}}f^+$. The norm identity for $T_{\overline{\varphi}}f$ then follows from Theorem \ref{matematrix}.
\end{proof}

In the following, when we consider $L$-invariant subspaces, they are assumed to be closed.
\begin{lemma}\label{backwarddense}
Let $B \in \HS(\HD, \C)$ be such that $\log (1- BB^*) \in L^1(\T)$, and $a$ the mate of $B$. Let $\HN$ be an $L$-invariant subspace of $\HH(B)$. Then $T_{\overline{a}}\HN$ is dense in $\HN$.
\end{lemma}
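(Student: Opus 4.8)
The plan is to carry out the whole argument inside the isometric model $J\colon \HH(B)\to H^2\oplus H^2(\HD)$ of Theorem \ref{matematrix}, writing $Jf=(f,f^+)$. Since $J$ is isometric, polarizing the norm identity gives
\[
\langle f,g\rangle_{\HH(B)}=\langle f,g\rangle_{H^2}+\langle f^+,g^+\rangle_{H^2(\HD)},\qquad f,g\in\HH(B).
\]
First I would record that $T_{\overline a}\HN\subseteq\HN$: choosing polynomials $p_n$ with $\|p_n\|_\infty\le1$ and $p_n\to a$ pointwise on $\D$, the operators $T_{\overline{p_n}}$ are polynomials in $L$, hence preserve the closed $L$-invariant subspace $\HN$, while $T_{\overline{p_n}}f\to T_{\overline a}f$ weakly in $\HH(B)$ exactly as in the second proof of Lemma \ref{contractionhbs2}; since $\HN$ is weakly closed, $T_{\overline a}f\in\HN$. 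Thus $\overline{T_{\overline a}\HN}=\HN$ is equivalent to the absence of a nonzero $g\in\HN$ orthogonal to $T_{\overline a}\HN$.

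So suppose $g\in\HN$ and $\langle T_{\overline a}f,g\rangle_{\HH(B)}=0$ for all $f\in\HN$; the goal is $g=0$. Using the inner product formula, the identity $(T_{\overline a}f)^+=T_{\overline a}f^+$ from Lemma \ref{mathbackinv}, and $T_{\overline a}^\ast=M_a$ on $H^2$ and on $H^2(\HD)$, orthogonality becomes
\[
\langle f,ag\rangle_{H^2}+\langle f^+,ag^+\rangle_{H^2(\HD)}=0,\qquad f\in\HN.
\]
Now I would feed the $L$-invariance back in: replacing $f$ by $L^nf\in\HN$, using $(L^nf)^+=L^nf^+$ from Theorem \ref{matematrix}(ii) and $(L|_{H^2})^\ast=M_z$, upgrades this to
\[
\langle f,z^nag\rangle_{H^2}+\langle f^+,z^nag^+\rangle_{H^2(\HD)}=0,\qquad f\in\HN,\ n\ge0.
\]
Specializing to $f=g$ and setting $\rho=|g|^2+\|g^+\|_{\HD}^2\in L^1(\T)$, which is nonnegative, this reads $\int_\T\rho\,\overline a\,\overline z^{\,n}\,dm=0$ for all $n\ge0$, i.e. $\rho\,\overline a\in\overline{H^1_0}$, equivalently $\rho a\in H^1_0$.

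It remains to prove the purely function-theoretic fact that if $a$ is outer with $a(0)>0$, $\rho\ge0$ lies in $L^1(\T)$, and $\rho a\in H^1_0$, then $\rho=0$; this is the step I expect to be the crux. Since $\rho a\in H^1\subseteq N^+$ and $1/a\in N^+$ because $a$ is outer, the quotient $\rho=(\rho a)/a$ lies in $N^+$; as $\rho\in L^1(\T)$, Smirnov's theorem forces $\rho\in H^1$. A function in $H^1$ with real boundary values has nonzero Fourier coefficients only at nonnegative indices, and realness makes them symmetric, so $\rho$ must be a real constant. Finally $\rho a\in H^1_0$ gives $0=\int_\T\rho a\,dm=\rho\,a(0)$, and $a(0)>0$ yields $\rho=0$. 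Hence $g=0$ and $g^+=0$, so no nonzero element of $\HN$ is orthogonal to $T_{\overline a}\HN$, proving that $T_{\overline a}\HN$ is dense in $\HN$.
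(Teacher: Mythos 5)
Your proof is correct and follows essentially the same route as the paper: establish $T_{\overline a}$-invariance of $\HN$ via the polynomial/weak-convergence argument, pair an orthogonal element against $T_{\overline a}L^nf$ using the isometric embedding $J$ to get vanishing moments of $a\,(|g|^2+\|g^+\|^2_{\HD})$, and conclude via outerness of $a$. The only difference is that you spell out the final step (Smirnov class division, then realness forcing a constant, then $a(0)>0$ forcing zero) in more detail than the paper does.
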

\begin{proof}
By the proof of method two in Lemma \ref{contractionhbs2}, we have $\HN$ is $T_{\overline{a}}$-invariant, and $\|T_{\overline{a}}\|_{B(\HH(B))} \leq \|a\|_\infty$. Suppose $f \in \HN$ and $f \perp T_{\overline{a}}\HN$. We need to show that $f = 0$.

For $n \geq 0$, we have $L^n f \in \HN$. So $f \perp T_{\overline{a}} L^n f$. Note that $T_{\overline{a}} L^n f = T_{\overline{az^n}} f$ and
\begin{align*}
0 = \langle f, T_{\overline{az^n}}f\rangle_{\HH(B)}& = \langle f, T_{\overline{az^n}} f\rangle_{H^2} + \langle f^+, T_{\overline{az^n}} f^+\rangle_{H^2(\HD)}\\
& = \int_\T a(z)z^n |f(z)|^2 \frac{|dz|}{2\pi} + \int_\T a(z)z^n \|f^+(z)\|_{\HD}^2 \frac{|dz|}{2\pi}\\
& = \int_\T a(z)z^n (|f(z)|^2 + \|f^+(z)\|_{\HD}^2) \frac{|dz|}{2\pi}.
\end{align*}
Let $h(z) = a(z)(|f(z)|^2 + \|f^+(z)\|_{\HD}^2)$. Then $h \in L^1(\T)$ and $\widehat{h}(n) = 0, n \leq 0$. Thus $h \in H_0^1$. Since $a$ is outer, we obtain that $|f(z)|^2 + \|f^+(z)\|_{\HD}^2 \in H_0^1$. It follows that $|f(z)|^2 + \|f^+(z)\|_{\HD}^2 = 0$ and hence $f = 0$.
\end{proof}
The above lemma can also be deduced from \cite[Lemma 4.2]{FMR20}.
When $B$ has finite rank, that is, $B \in \HS(\HD, \C)$ with $\dim \HD < \infty$, the following result was proved in \cite[Theorem 5.5]{AM19}. We restate Theorem A in the following.
\begin{theorem}\label{polydense}
Let $B \in \HS(\HD, \C)$ be such that $\log (1- BB^*) \in L^1(\T)$, and $a$ the mate of $B$. Then the polynomials are dense in $\HH(B)$.
\end{theorem}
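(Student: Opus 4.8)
The plan is to show that polynomials are dense in $\HH(B)$ by proving that the orthogonal complement of the closure of the polynomials is trivial. Let $\HN$ denote the closure of the polynomials in $\HH(B)$; by Lemma \ref{containment} the polynomials lie in $\HH(B)$, so $\HN$ is well-defined. Since $L = T_{\overline z}$ maps polynomials to polynomials and acts continuously on $\HH(B)$, the subspace $\HN$ is $L$-invariant. The goal is to show $\HN = \HH(B)$.

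**First I would** invoke the structure from Lemma \ref{backwarddense}, which tells us that $T_{\overline a}\HN$ is dense in $\HN$ for any $L$-invariant subspace $\HN$. Applying this to $\HN = \closure(\text{polynomials})$, and combining with Lemma \ref{containment} (which gives $\HM(\overline a) \subseteq \HH(B)$ contractively and says $\ran T_{\overline a}$ contains all polynomials), I would try to leverage the interplay between $T_{\overline a}$ and the polynomials. The key idea, following \cite{Sa86}, is that since $a$ is outer, $T_{\overline a}$ has dense range in $H^2$, and the polynomials are a core for the relevant operators.

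**The main technical step** is to exploit the isometric embedding $J$ from Theorem \ref{matematrix} together with the norm identity in Lemma \ref{mathbackinv}. Suppose $g \in \HH(B)$ is orthogonal to $\HN$. Using $Jg = (g, g^+)$, I would test orthogonality against $T_{\overline a} p$ for polynomials $p$, or more directly against the reproducing kernels $PK_\lambda$, which are limits of polynomial combinations. The computation in Lemma \ref{backwarddense} shows that orthogonality of $g$ to $T_{\overline{az^n}}g$ for all $n \geq 0$ forces $|g|^2 + \|g^+\|_\HD^2 \in H^1_0$, hence $g = 0$. The plan is to run a parallel argument: since $\HN$ contains all polynomials and hence $\HE = \ker L$ and all of $T_{\overline a}H^2 \cap \HH(B)$, one shows that a vector orthogonal to $\HN$ must be orthogonal to $T_{\overline{az^n}}g$ structures, driving the same $H^1_0$ conclusion.

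**The hard part will be** verifying that $T_{\overline a}$ applied to the dense subspace really recovers enough of $\HH(B)$ to conclude density, since $T_{\overline a}$ is not surjective and its range $\ran T_{\overline a}$ is only dense, not all of $H^2$. One must carefully track how the auxiliary component $f^+$ behaves under $T_{\overline\varphi}$ (controlled by Lemma \ref{mathbackinv}) and ensure that the outer property of $a$ is fully used to pass from $h \in H^1_0$ back to the pointwise vanishing of $|g|^2 + \|g^+\|_\HD^2$. I expect the cleanest route is to combine Lemma \ref{backwarddense} (density of $T_{\overline a}\HN$ in $\HN$) with an induction on $L$, reducing the density of polynomials to the density of the single outer function $a$ and its $L$-orbit, and then to close the argument exactly as in the final lines of Lemma \ref{backwarddense}, where the outerness of $a$ annihilates any orthogonal vector.
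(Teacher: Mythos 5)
Your proposal is correct and rests on essentially the same ingredients as the paper's proof: the annihilation computation of Lemma \ref{backwarddense} combined with Lemma \ref{containment} and the fact that $T_{\overline{a}}$ sends polynomials to polynomials. The paper packages this more efficiently by applying Lemma \ref{backwarddense} to $\HN = \HH(B)$ itself, so that $T_{\overline{a}}\HH(B)$, hence $\HM(\overline{a})$, hence the span of the polynomials $T_{\overline{a}}z^n$, is dense in $\HH(B)$, which spares you from rerunning the $H^1_0$ argument; if you do rerun it as you propose, the one step to make explicit is that $\HM(\overline{a}) \subseteq \HN$ (because $T_{\overline{a}}$ of a polynomial is a polynomial and $H^2 \to \HM(\overline{a}) \hookrightarrow \HH(B)$ is continuous), since that is exactly what legitimizes the key orthogonality $g \perp T_{\overline{az^n}}g$ for $g \perp \HN$.
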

\begin{proof}
Lemma \ref{backwarddense} implies that $T_{\overline{a}}\HH(B)$ is dense in $\HH(B)$. By $\HH(B) \subseteq H^2$, we have $\HM(\overline{a})$ is dense in $\HH(B)$. Since $T_{\overline{a}} z^n, n \geq 0$ span $\HM(\overline{a})$, and $T_{\overline{a}} z^n$ are polynomials, we conclude from Lemma \ref{containment} that the polynomials are dense in $\HH(B)$.
\end{proof}

\subsection{Backward shift invariant subspaces}
Suppose $B \in \HS(\HD, \C)$ is a Schur function. When $\dim \HD = 1$ or $\dim \HD < +\infty$, the following result was obtained in \cite[Theorem 5]{Sa86} or \cite[Theorem 5.12]{AM19}. We restate Theorem B in the following.
\begin{theorem}\label{backwardinv}
Let $B \in \HS(\HD, \C)$ be such that $\log (1- BB^*) \in L^1(\T)$, and $a$ the mate of $B$. Then any proper $L$-invariant subspace of $\HH(B)$ is of the form
$$\HH(B) \cap K_\theta,$$
where $\theta$ is an inner function and $K_\theta = H^2 \ominus \theta H^2$.
\end{theorem}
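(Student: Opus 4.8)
The plan is to characterize proper $L$-invariant subspaces of $\HH(B)$ by comparing them with backward shift invariant subspaces of $H^2$, exploiting the contractive containment $\HH(B) \subseteq H^2$ and the density of polynomials from Theorem \ref{polydense}. The key structural fact I would invoke is that the proper $L$-invariant (equivalently, backward-shift-invariant) subspaces of $H^2$ are exactly the model spaces $K_\theta = H^2 \ominus \theta H^2$ for $\theta$ inner, together with the trivial subspace; this is the classical Beurling--type theorem for $L = M_z^*$ on $H^2$. So given a proper $L$-invariant subspace $\HN \subseteq \HH(B)$, the natural candidate is to take $\theta$ to be the inner function associated to the $H^2$-closure of $\HN$, and then show $\HN = \HH(B) \cap K_\theta$.

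First I would consider the closure $\overline{\HN}^{H^2}$ of $\HN$ in the $H^2$ norm. Because the inclusion $\HH(B) \hookrightarrow H^2$ is contractive and $L$ acts as $T_{\overline{z}}$ on both spaces, this closure is a backward shift invariant subspace of $H^2$, hence equals $K_\theta$ for some inner $\theta$ (assuming it is proper; if it were all of $H^2$ one argues $\HN = \HH(B)$, contradicting properness, using that $\HH(B)$ is dense in its own $H^2$-closure via polynomials). This gives immediately the containment $\HN \subseteq \overline{\HN}^{H^2} \cap \HH(B) = \HH(B) \cap K_\theta$. The substance of the proof is the reverse containment $\HH(B) \cap K_\theta \subseteq \HN$.

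For the reverse inclusion I would use the decomposition machinery from Theorem \ref{matematrix} together with Lemma \ref{backwarddense}. The idea is that $T_{\overline{a}}$ preserves $\HN$ and, by Lemma \ref{backwarddense}, $T_{\overline{a}}\HN$ is dense in $\HN$; moreover $T_{\overline{a}}$ acts nicely on the $f^+$ component by Lemma \ref{mathbackinv}, so norm convergence in $\HH(B)$ can be detected through the pair $(f, f^+) = Jf$. Take $g \in \HH(B) \cap K_\theta$. Since $g \in K_\theta = \overline{\HN}^{H^2}$, there are $f_n \in \HN$ with $f_n \to g$ in $H^2$. The obstacle is that $H^2$-convergence is weaker than $\HH(B)$-convergence, so I must upgrade it: I would apply $T_{\overline{a}}$ (or polynomials in $L$) to gain control of the $f^+$-components, using that the mate $a$ is outer and the norm identity in Lemma \ref{mathbackinv} to show the auxiliary components $f_n^+$ also converge appropriately, yielding convergence of $f_n$ to $g$ in the $\HH(B)$ norm and hence $g \in \HN$.

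The main obstacle will be precisely this upgrade from $H^2$-convergence to $\HH(B)$-convergence for elements of $\HH(B) \cap K_\theta$, i.e.\ controlling the $f^+$-component of the limit. I expect to handle it by a weak-compactness argument: the sequence $f_n \in \HN$ that approximates $g$ in $H^2$ can be taken bounded in $\HH(B)$ (after passing to suitable averages or applying $T_{\overline{a}}$-smoothing), so a subsequence converges weakly in $\HH(B)$ to some $h \in \HN$; since weak $\HH(B)$-convergence forces pointwise convergence and $f_n \to g$ pointwise on $\D$, one identifies $h = g$, placing $g \in \HN$. Establishing that the approximating sequence is $\HH(B)$-bounded, via the density $\overline{T_{\overline{a}}\HN} = \HN$ and the isometric embedding $J$, is the delicate point that ties the argument together.
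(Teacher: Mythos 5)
Your overall strategy is the same as the paper's: form the $H^2$-closure $\HE$ of $\HN$, identify it as $K_\theta$ via the Beurling-type theorem for the backward shift, note $\HN \subseteq \HH(B)\cap K_\theta$, and use $T_{\overline{a}}$ together with Lemma \ref{backwarddense} for the reverse inclusion. However, the endgame you propose for the reverse inclusion has a genuine gap. Given $g \in \HH(B)\cap K_\theta$ and $f_n \in \HN$ with $f_n \to g$ in $H^2$, you want to upgrade to weak or norm convergence of (a modification of) $f_n$ to $g$ in $\HH(B)$. There is no reason the approximating sequence is bounded in $\HH(B)$: $H^2$-convergence gives no control on the $\HH(B)$-norms or on the auxiliary components $f_n^+$, and the ``suitable averages'' you invoke are not specified. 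The one concrete smoothing you mention, applying $T_{\overline{a}}$, does produce norm convergence, but of the wrong object: since $T_{\overline{a}}$ factors contractively through $\HM(\overline{a}) \subseteq \HH(B)$ (Lemma \ref{containment}), one gets $\|T_{\overline{a}}(f_n-g)\|_{\HH(B)} \leq \|f_n-g\|_{H^2} \to 0$, hence $T_{\overline{a}}g \in \HN$; the limit you can identify is $T_{\overline{a}}g$, not $g$, so $g\in\HN$ does not follow from this step alone.

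The missing idea is to apply Lemma \ref{backwarddense} not to $\HN$ but to the closed $L$-invariant subspace $\HH(B)\cap K_\theta$ itself. The smoothing argument shows $T_{\overline{a}}(\HH(B)\cap K_\theta) \subseteq T_{\overline{a}}\HE \subseteq \HN$. Since $\HH(B)\cap K_\theta$ is closed in $\HH(B)$ (the embedding into $H^2$ is contractive) and $L$-invariant, Lemma \ref{backwarddense} gives that $T_{\overline{a}}(\HH(B)\cap K_\theta)$ is dense in $\HH(B)\cap K_\theta$ in the $\HH(B)$-norm; as $\HN$ is closed, this forces $\HH(B)\cap K_\theta \subseteq \HN$. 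Your use of Lemma \ref{backwarddense} only for the density of $T_{\overline{a}}\HN$ in $\HN$ points in the wrong direction: it says nothing about elements of $\HH(B)\cap K_\theta$ not yet known to lie in $\HN$. With this replacement your argument closes and coincides with the paper's proof.
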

\begin{proof}
Let $\HN$ be a proper $L$-invariant subspace of $\HH(B)$. Let $\HE = \text{clos}_{H^2} \HN$, the closure of $\HN$ in $H^2$. We claim that $\HN = \HE \cap \HH(B)$.

It is clear that $\HN \subseteq \HE \cap \HH(B)$. To prove the reverse inclusion, we first show that $T_{\overline{a}} \HE$ is contained in $\HN$. Let $f \in \HE$. Then there exist $f_n \in \HN$ such that $f_n \rightarrow f \in H^2$. So $T_{\overline{a}} f_n \rightarrow T_{\overline{a}} f$ in the norm of $\HM(\overline{a})$. By Lemma \ref{containment}, we have $T_{\overline{a}} f_n \rightarrow T_{\overline{a}} f$ in $\HH(B)$. Since $\HN$ is $L$-invariant, it follows form Lemma \ref{backwarddense} that $T_{\overline{a}} f_n \in \HN$ and so $T_{\overline{a}} f \in \HN$. Thus $T_{\overline{a}} \HE \subseteq \HN$, and $T_{\overline{a}}(\HE \cap \HH(B)) \subseteq \HN$. Note that $\HE \cap \HH(B)$ is closed in $\HH(B)$ and $L$-invariant. Hence Lemma \ref{backwarddense} ensures that $\HE \cap \HH(B)$ is $T_{\overline{a}}$-invariant and $T_{\overline{a}}(\HE \cap \HH(B))$ is dense in $\HE \cap \HH(B)$. Therefore $\HE \cap \HH(B) \subseteq \HN$ and so $\HN = \HE \cap \HH(B)$. The conclusion of the theorem then follows from Beurling's theorem.
\end{proof}
The above theorem also follows from Theorem 4.1 or Theorem 4.6 in \cite{FMR20}.

\section{Carath\'{e}odory condition and Cyclic vectors}
\subsection{Carath\'{e}odory condition}
Let $B \in \HS(\HD, \C)$. If $\xi = (\xi_1, \xi_2, \ldots) \in l^2, \|\xi\|_{l^2} \leq 1$, then $\frac{1+B(z)\xi^*}{1-B(z)\xi^*}, z \in \D$ has positive real part, where we think of $B(z) = (b_1(z), b_2(z), \ldots)$, $z \in \D$ as a vector in $l^2$ and $B(z)\xi^* = \sum_{i=1}^\infty b_i(z) \overline{\xi_i}$ denotes the inner product between $B(z)$ and $\xi$ in $l^2$.  So there is a unique positive finite Borel measure $\mu_\xi$ on $\T$ such that
$$H_\xi(z): = \frac{1+B(z)\xi^*}{1-B(z)\xi^*} = \int_\T \frac{\zeta+z}{\zeta-z} d\mu_\xi(\zeta) + i \text{Im} \frac{1+B(0)\xi^*}{1-B(0)\xi^*}, z \in \D.$$
We call $\mu_\xi$ the Aleksandrov-Clark measure for $B$ associated with $\xi$. We then have
$$\frac{1-|B(z)\xi^*|^2}{|1-B(z)\xi^*|^2} = \int_\T \frac{1-|z|^2}{|\zeta-z|^2} d\mu_\xi(\zeta),$$
and
\begin{align}\label{aclarkforB}
(1-B(z)\xi^*)^{-1} = \int_\T \frac{1}{1-z\overline{\zeta}}d\mu_\xi(\zeta) + \frac{1-\overline{H_\xi(0)}}{2}.
\end{align}
The proof of the following lemma uses the same argument as in \cite[Theorem 3.1]{LMT}.
\begin{lemma}\label{limitsclark}
Let $B \in \HS(\HD, \C)$, $\xi \in l^2, \|\xi\|_{l^2} \leq 1$. Then for any $\lambda \in \T$, the non-tangential limit
$$\mu_\xi(\lambda) = \angle\lim_{z \rightarrow \lambda} (1-z\overline{\lambda})(1-B(z)\xi^*)^{-1}$$
exists.
\end{lemma}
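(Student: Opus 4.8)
The plan is to start from the integral representation (\ref{aclarkforB}) and reduce the problem to a standard fact about the Cauchy transform of the positive measure $\mu_\xi$. Multiplying (\ref{aclarkforB}) by $(1-z\overline{\lambda})$ gives
$$(1-z\overline{\lambda})(1-B(z)\xi^*)^{-1} = (1-z\overline{\lambda})\int_\T \frac{d\mu_\xi(\zeta)}{1-z\overline{\zeta}} + (1-z\overline{\lambda})\frac{1-\overline{H_\xi(0)}}{2}.$$
Since $(1-z\overline{\lambda}) \to 0$ as $z \to \lambda$, the second term tends to $0$ along any approach. Thus it suffices to show that the non-tangential limit of the first term exists; I will in fact show it equals the point mass $\mu_\xi(\{\lambda\})$, which explains the suggestive notation $\mu_\xi(\lambda)$ in the statement.

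Next I would split the measure according to its atom at $\lambda$: since $\mu_\xi$ is a finite positive Borel measure, $\mu_\xi(\{\lambda\})$ is well defined, and I write $\mu_\xi = \mu_\xi(\{\lambda\})\,\delta_\lambda + \nu$ with $\nu$ the positive measure satisfying $\nu(\{\lambda\}) = 0$. The atomic part contributes exactly $\mu_\xi(\{\lambda\})$ for every $z$, because
$$(1-z\overline{\lambda})\int_\T \frac{\mu_\xi(\{\lambda\})\,d\delta_\lambda(\zeta)}{1-z\overline{\zeta}} = \mu_\xi(\{\lambda\}).$$
Consequently the whole problem reduces to proving that
$$\angle\lim_{z \to \lambda}(1-z\overline{\lambda})\int_\T \frac{d\nu(\zeta)}{1-z\overline{\zeta}} = 0.$$

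The heart of the argument, and the step I expect to require the most care, is the dominated-convergence estimate for this remainder. I would bound the modulus of the integrand by $\frac{|1-z\overline{\lambda}|}{|1-z\overline{\zeta}|}$ and exploit the non-tangential approach. On a Stolz angle at $\lambda$ one has $|1-z\overline{\lambda}| = |\lambda - z| \le \alpha(1-|z|)$ for a fixed aperture constant $\alpha$, while the elementary inequality $|1-z\overline{\zeta}| \ge 1 - |z|$ holds for every $\zeta \in \T$. Hence
$$\frac{|1-z\overline{\lambda}|}{|1-z\overline{\zeta}|} \le \alpha$$
uniformly in $\zeta \in \T$ and in $z$ in the Stolz angle, giving a constant (hence $\nu$-integrable) dominating function. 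For each fixed $\zeta \ne \lambda$ the integrand tends to $0$ as $z \to \lambda$, since the numerator vanishes while the denominator converges to $|1 - \lambda\overline{\zeta}| = |\zeta - \lambda| > 0$; because $\nu(\{\lambda\}) = 0$ this holds $\nu$-almost everywhere. Dominated convergence then forces the remainder to vanish, and combining the three contributions shows that the non-tangential limit exists for every $\lambda \in \T$ and equals $\mu_\xi(\{\lambda\})$.
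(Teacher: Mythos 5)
Your proposal is correct and follows essentially the same route as the paper's proof: both split off the atom of $\mu_\xi$ at $\lambda$ (your decomposition $\mu_\xi=\mu_\xi(\{\lambda\})\delta_\lambda+\nu$ is the paper's splitting of the integral over $\{\lambda\}$ and $\T\setminus\{\lambda\}$), and both kill the remainder by dominating $|1-z\overline{\lambda}|/|1-z\overline{\zeta}|$ by the aperture constant $\alpha$ on the Stolz angle and applying dominated convergence. No changes needed.
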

\begin{proof}
By (\ref{aclarkforB}), we have
\begin{align*}
\angle\lim_{z \rightarrow \lambda} (1-z\overline{\lambda})(1-B(z)\xi^*)^{-1}& = \angle\lim_{z \rightarrow \lambda} (1-z\overline{\lambda}) \int_\T \frac{1}{1-z\overline{\zeta}}d\mu_\xi(\zeta)\\
& = \mu_\xi(\lambda) + \angle\lim_{z \rightarrow \lambda} \int_{\T\backslash \{\lambda\}} \frac{1-z\overline{\lambda}}{1-z\overline{\zeta}}d\mu_\xi(\zeta)
\end{align*}
Let $g_z(\zeta) = \frac{1-z\overline{\lambda}}{1-z\overline{\zeta}}, \zeta \in \T\backslash \{\lambda\}$. Note that in the non-tangential region
$$\Gamma_\alpha(\lambda) = \{z \in \D: |z-\lambda| < \alpha (1-|z|)\}, \alpha >1$$
$$|g_z(\zeta)| = \frac{|z - \lambda|}{|z-\zeta|} < \alpha \frac{1-|z|}{|z-\zeta|} \leq \alpha.$$
Since $\angle\lim_{z \rightarrow \lambda} g_z(\zeta) = 0, \zeta \in \T\backslash \{\lambda\}$, we conclude from Lebesgue dominated convergence theorem that
$\angle\lim_{z \rightarrow \lambda} (1-z\overline{\lambda})(1-B(z)\xi^*)^{-1} = \mu_\xi(\lambda)$.
\end{proof}

Let $B \in \HS(\HD, \C), \lambda \in\T$. If there is $\xi \in l^2, \|\xi\|_{l^2} = 1$ such that $\angle\lim_{z \rightarrow \lambda} (1-B(z)\xi^*)(1-z\overline{\lambda})^{-1}$ exists, then we say that $B$ satisfies {\it Carath\'{e}odory condition} at $\lambda$. By Lemma \ref{limitsclark}, $B$ satisfies Carath\'{e}odory condition at $\lambda$ if and only if $\mu_\xi(\lambda) \neq 0$. When $B$ is a scalar-valued Schur function $b$, the following result was obtained in \cite{Sa88}.
\begin{theorem}\label{non-tangentiallimit}
Let $B \in \HS(\HD, \C), \lambda \in\T$. Then the following are equivalent.
\end{theorem}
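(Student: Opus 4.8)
The plan is to prove a cycle of equivalences among the natural boundary conditions at $\lambda$, which for the row Schur function $B$ should read: (i) $B$ satisfies the Carath\'eodory condition at $\lambda$; (ii) $\liminf_{z\to\lambda}\|K^B_z\|_{\HH(B)}<\infty$ along nontangential approach; (iii) the kernels $K^B_z$ converge weakly in $\HH(B)$ to some $K^B_\lambda$ as $z\to\lambda$ nontangentially; and (iv) every $f\in\HH(B)$ has a finite nontangential limit at $\lambda$. The computational backbone is the identity $\|K^B_z\|^2_{\HH(B)}=K^B_z(z)=\dfrac{1-B(z)B(z)^*}{1-|z|^2}$. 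Writing $|B(z)|^2=B(z)B(z)^*=\sup_{\|\xi\|_{l^2}=1}|B(z)\xi^*|^2$, I would record the representation
\[
\|K^B_z\|^2_{\HH(B)}=\inf_{\|\xi\|_{l^2}=1}\frac{1-|B(z)\xi^*|^2}{1-|z|^2},
\]
with the infimum attained at $\xi_z=B(z)^*/|B(z)|$. This links the vector-valued kernel norm directly to the family of scalar Julia quotients governed by the Aleksandrov-Clark measures $\mu_\xi$ of the previous subsection.

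With this representation in hand the easy arc runs (i)$\Rightarrow$(iii)$\Rightarrow$(iv)$\Rightarrow$(ii). For (i)$\Rightarrow$(iii) the Carath\'eodory condition supplies a unit $\xi$ with $\mu_\xi(\lambda)\ne0$; the full Julia--Carath\'eodory theorem applied to the scalar unit-ball function $\psi=B\xi^*$ then gives that $\dfrac{1-|B(z)\xi^*|^2}{1-|z|^2}$ has a finite nontangential limit at $\lambda$, and since the displayed kernel norm is the infimum over directions it is bounded throughout every nontangential cone at $\lambda$. A bounded net in the Hilbert space $\HH(B)$ has weak cluster points, and the reproducing identity $\langle f,K^B_z\rangle_{\HH(B)}=f(z)$ together with the polynomial density of Theorem~\ref{polydense} forces every such cluster point to be the unique $K^B_\lambda$ determined by $\langle p,K^B_\lambda\rangle_{\HH(B)}=p(\lambda)$ for polynomials $p$; hence $K^B_z\to K^B_\lambda$ weakly, which is (iii). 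The step (iii)$\Rightarrow$(iv) is immediate since $f(z)=\langle f,K^B_z\rangle\to\langle f,K^B_\lambda\rangle$, so $\angle\lim_{z\to\lambda}f(z)$ exists for every $f$; and (iv)$\Rightarrow$(ii) follows from the uniform boundedness principle, for if evaluation at $\lambda$ is everywhere defined on $\HH(B)$ it is bounded and its representing vector caps $\|K^B_z\|$.

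The main obstacle will be the return implication (ii)$\Rightarrow$(i): one must produce a \emph{single} direction $\xi_\lambda\in l^2$ of unit norm witnessing the Carath\'eodory condition, whereas the minimizing directions $\xi_z=B(z)^*/|B(z)|$ in the displayed formula depend on $z$ and need not converge when $\dim\HD=\infty$. The plan is to take a nontangential sequence $z_n\to\lambda$ realizing the finite $\liminf$, note that the bound $1-|B(z_n)|^2\le M(1-|z_n|^2)$ forces $|B(z_n)|\to1$ and $B(z_n)\xi_{z_n}^*\to1$, and then pass to a weak limit $\xi_\lambda$ of $\xi_{z_n}$ in the unit ball of $l^2$. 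The delicate points are to show that $\xi_\lambda$ has unit norm (weak limits can lose mass) and that $\mu_{\xi_\lambda}$ carries a genuine atom at $\lambda$ rather than merely yielding a bounded quotient; here I would use the Poisson representation $\dfrac{1-|B(z)\xi^*|^2}{|1-B(z)\xi^*|^2}=\int_\T\dfrac{1-|z|^2}{|\zeta-z|^2}\,d\mu_\xi(\zeta)$ together with formula~(\ref{aclarkforB}) and Lemma~\ref{limitsclark} to locate the atom, and transfer the boundary behaviour of the mate $a$ (via $|a|^2+|B|^2=1$ on $\T$ and the decomposition $Jf=(f,f^+)$ of Theorem~\ref{matematrix}) into the required nonvanishing, following the scalar template of Sarason~\cite{Sa88} with the outer operator function $A$ playing the role of his scalar outer factor.
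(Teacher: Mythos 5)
There is a genuine gap, and it sits exactly where you flag the ``delicate points.'' First, a hypothesis problem: Theorem \ref{non-tangentiallimit} assumes only $B \in \HS(\HD,\C)$ --- there is no assumption $\log(1-BB^*) \in L^1(\T)$ --- so $B$ need not have a mate, and neither Theorem \ref{polydense} (polynomial density) nor Theorem \ref{matematrix} (the outer $A$ and the pair $(f,f^+)$) is available here. This breaks your identification of the weak cluster points of $K_z^B$ (you pin them down via polynomial density), and, more seriously, it removes the entire mechanism you propose for the return implication, namely ``transferring the boundary behaviour of the mate $a$.'' The substitute for polynomial density is density of the reproducing kernels, but to use that one must show $K_z^B(w) \to K(w)$ pointwise, i.e.\ that $B(z)$ converges weakly in $l^2$ as $z \to \lambda$ nontangentially --- which is precisely the point your plan leaves open.

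Second, you have dropped condition (ii) of the theorem, the membership $\frac{1-B(z)\xi^*}{1-z\overline{\lambda}} \in \HH(B)$, from your cycle; besides being part of what must be proved, it is the pivot that closes the hard direction. The paper passes from the finite liminf to a weak limit $K$ of a bounded sequence $K_{z_n}^B$ together with a weak limit $\xi$ of $B(z_n)$ in $l^2$, so that $K = \frac{1-B\xi^*}{1-z\overline{\lambda}} \in \HH(B)$; the identity $1 - B(z_n)\xi^* = (1-z_n\overline{\lambda})K(z_n)$ with $|K(z_n)| \le \|K\|\,\|K_{z_n}^B\|$ bounded forces $B(z_n)\xi^* \to 1$, hence $\|\xi\|_{l^2}=1$, which disposes of the mass-loss worry without ever normalizing $B(z)$. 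Then $K \in \HH(B)\subseteq H^2$ gives $|K(z)| \precsim (1-|z|)^{-1/2}$, so $1-B(z)\xi^* = (1-z\overline{\lambda})K(z) \to 0$ nontangentially; by Cauchy--Schwarz this identifies every weak cluster point of $B(z)$ with $\xi$, yields the pointwise convergence of kernels needed for the weak convergence statement, and makes the Carath\'{e}odory condition immediate, since the Carath\'{e}odory quotient \emph{is} the function $K$, which by (iii) has a nontangential limit. Your alternative --- weak limits of the normalized directions $B(z_n)^*/\|B(z_n)\|_{l^2}$ plus the Poisson representation --- does not by itself rule out $\|\xi_\lambda\|_{l^2} < 1$ or produce the atom of $\mu_{\xi_\lambda}$, and the tools you invoke to repair this do not exist under the theorem's hypotheses. (Note also that the equivalence of the Carath\'{e}odory condition with $\mu_\xi(\lambda)\neq 0$ is already free from Lemma \ref{limitsclark}, so that part of your plan is fine.)
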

\begin{itemize}
\item[(i)] $$\liminf_{z \rightarrow \lambda} \frac{1-\|B(z)\|_{l^2}}{1-|z|} = c < +\infty.$$
\item[(ii)] There is $\xi \in l^2, \|\xi\|_{l^2} = 1$ such that
$$\frac{1-B(z)\xi^*}{1-z\overline{\lambda}} \in \HH(B).$$
\item[(iii)] For any $f \in \HH(B)$, $f$ has a non-tangential limit at $\lambda$.
\item[(iv)] $B$ satisfies Carath\'{e}odory condition at $\lambda$, i.e. there is $\xi \in l^2, \|\xi\|_{l^2} = 1$ such that $$\angle\lim_{z \rightarrow \lambda} (1-B(z)\xi^*)(1-z\overline{\lambda})^{-1}$$
    exists.
\item[(v)] There is $\xi \in l^2, \|\xi\|_{l^2} = 1$ such that $\mu_\xi(\lambda) \neq 0$, where $\mu_\xi$ is the Aleksandrov-Clark measure for $B$ associated with $\xi$.
\end{itemize}
\begin{proof}
(i) $\Rightarrow$ (ii) If $\liminf_{z \rightarrow \lambda} \frac{1-\|B(z)\|_{l^2}}{1-|z|} = c < +\infty$, then there is $z_n \rightarrow \lambda$ such that $K_{z_n}^B$ is bounded in $\HH(B)$, so there is a subsequence still denoted by $\{z_n\}$ such that $K_{z_n}^B \rightarrow K \in \HH(B)$ weakly. Since $B \in \HS(\HD, \C)$, $B(z_n)$ is also bounded in $l^2$, and there is a subsequence denoted by $\{z_n\}$ again such that $B(z_n) \rightarrow \xi$ weakly in $l^2$. Then
\begin{align*}
K(z)&  = \langle K, K^B_z\rangle_{\HH(B)} = \lim_{n\rightarrow \infty}  \langle K_{z_n}, K^B_z\rangle_{\HH(B)}\\
& = \lim_{n\rightarrow \infty} K_{z_n}^B(z) = \lim_{n\rightarrow \infty} \frac{1-B(z)B(z_n)^*}{1-z\overline{z_n}}\\
& = \frac{1-B(z)\xi^*}{1-z\overline{\lambda}}, z \in \D.
\end{align*}
Thus $\|\xi\|_{l^2} = 1$ and (ii) holds.

(ii) $\Rightarrow$ (iii) Let $K(z) = \frac{1-B(z)\xi^*}{1-z\overline{\lambda}} \in \HH(B)$. We first prove that $K_z^B$ is bounded in any non-tangential region. For $z \in \Gamma_\alpha(\lambda) = \{z \in \D: |z-\lambda| < \alpha (1-|z|)\}$,
\begin{align*}
|K(z)|& = \frac{|1-B(z)\xi^*|}{|1-z\overline{\lambda}|} \geq \frac{1-\|B(z)\|_{l^2}}{|z-\lambda|}\\
& = \frac{1-\|B(z)\|_{l^2}^2}{1-|z|^2}\frac{1-|z|^2}{1+\|B(z)\|_{l^2}}\frac{1}{|z-\lambda|}\\
& = \|K_z^B\|_{\HH(B)}^2 \frac{1+|z|}{1+\|B(z)\|_{l^2}}\frac{1-|z|}{|z-\lambda|}\\
& > \frac{1}{\alpha} \|K_z^B\|_{\HH(B)}^2 \frac{1+|z|}{1+\|B(z)\|_{l^2}}.
\end{align*}
It then follows from $|K(z)| \leq \|K\|_{\HH(B)} \|K_z^B\|_{\HH(B)}$ that $\|K_z^B\|_{\HH(B)} < 2\alpha \|K\|_{\HH(B)}$ for $z \in \Gamma_\alpha(\lambda)$. This also implies (i) holds.

Now we show for any $f \in \HH(B)$, $f$ has a non-tangential limit at $\lambda$. By $K \in \HH(B) \subseteq H^2$, we obtain that $|K(z)| \precsim (1-|z|)^{-1/2}$, and
$$1-B(z)\xi^* = (1-z\overline{\lambda}) K(z) \rightarrow 0$$
as $z \rightarrow \lambda$ non-tangentially. We claim that $B(z) \rightarrow \xi$ weakly in $l^2$ as $z \rightarrow \lambda$ non-tangentially. In fact, let $\{z_n\}$ be any sequence tending to $\lambda$ non-tangentially and such that $B(z_n) \rightarrow \xi_0$ weakly in $l^2$. Then $\|\xi_0\|_{l^2} \leq 1$. Since $1 - B(z) \xi^* \rightarrow 0$ as $z \rightarrow \lambda$ non-tangentially, we have $\xi_0 \xi^* = 1$. Since $\|\xi\|_{l^2} = 1$ and $\|\xi_0\|_{l^2} \leq 1$, it implies that $\xi_0 = \xi$. Thus $B(z) \rightarrow \xi$ weakly in $l^2$ as $z \rightarrow \lambda$ non-tangentially.
If we denote $\xi$ by $B(\lambda)$, then $K(z) = \frac{1-B(z)B(\lambda)^*}{1-z\overline{\lambda}}$, and
$$K^B_z(w) = \frac{1-B(w)B(z)^*}{1-w\overline{z}} \rightarrow K(w), w \in \D$$
as $z \rightarrow \lambda$ non-tangentially. So $\angle\lim_{z \rightarrow \lambda} \langle K_z^B, K_w^B\rangle_{\HH(B)} = \langle K, K_w^B\rangle_{\HH(B)}$. Since the reproducing kernels are dense in $\HH(B)$ and $\|K_z^B\|_{\HH(B)} < 2\alpha \|K\|_{\HH(B)}$ for $z \in \Gamma_\alpha(\lambda)$, we conclude that $K_{z}^B \rightarrow K \in \HH(B)$ weakly as $z \rightarrow \lambda$ non-tangentially. So (iii) holds, and we can think of $K$ as the reproducing kernel $K_\lambda^B$ at $\lambda$.

(iii) $\Rightarrow$ (i) If for any $f \in \HH(B)$, $f$ has a non-tangential limit at $\lambda$, then
$$f(\lambda) = \angle\lim_{z \rightarrow \lambda} \langle f, K_z^B\rangle_{\HH(B)}.$$
Thus by the uniform boundedness principle,
$$\sup_{z \in \Gamma_\alpha(\lambda)} \|K_z^B\|_{\HH(B)} < + \infty.$$
So (i) holds.

It is clear that (ii) and (iii) imply (iv). By Lemma \ref{limitsclark}, we see that (iv) is equivalent to (v). Now we show (iv) implies (i). Let $z = r\lambda, 0 < r <1$. Then (i) follows from the following inequality,
\begin{align*}
\left|\frac{1-B(r\lambda)\xi^*}{1-r\lambda\overline{\lambda}}\right| \geq \frac{1-\|B(r\lambda)\|_{l^2}}{1-r}.
\end{align*}
\end{proof}

Let $b(z) = B(z)\xi^*$. Then $b \in \HS(\C,\C)$, and one checks that $\HH(B)$ is contractively contained in $\HH(b)$. So we can also use the result in \cite{Sa88} to derive some of the equivalences in the above theorem.

If $B\in \HS(\HD,\C)$ satisfies Carath\'{e}odory condition at $\lambda$, then $B$ has non-tangential limit at $\lambda$ and we let $K_\lambda^B(z) = \frac{1-B(z)B(\lambda)^*}{1-z\overline{\lambda}}$.
\begin{corollary}\label{normconvergence}
Suppose $B \in \HS(\HD,\C)$ satisfies Carath\'{e}odory condition at $\lambda$. Then $K_z^B \rightarrow K_\lambda^B$ in $\HH(B)$ as $z \rightarrow \lambda$ non-tangentially. Consequently, if $B \in \HS(\HD,\C)$ satisfies Carath\'{e}odory condition at $\lambda$, then
$$\liminf_{z \rightarrow \lambda} \frac{1-\|B(z)\|_{l^2}}{1-|z|} = \angle\lim_{z \rightarrow \lambda} \frac{1-B(z)B(\lambda)^*}{1-z\overline{\lambda}} = \frac{1}{\mu_{B(\lambda)}(\lambda)},$$
where $\mu_{B(\lambda)}$ is the Aleksandrov-Clark measure for $B$ associated with $B(\lambda)$.
\end{corollary}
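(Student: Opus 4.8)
The plan is to upgrade the \emph{weak} convergence $K_z^B\to K_\lambda^B$ — which is already produced inside the proof of Theorem \ref{non-tangentiallimit} — to \emph{norm} convergence, using the standard Hilbert space principle that weak convergence together with convergence of the norms implies norm convergence. By the Carath\'eodory condition and Theorem \ref{non-tangentiallimit}(ii) there is $\xi\in l^2$ with $\|\xi\|_{l^2}=1$ and $K_\lambda^B(z)=\frac{1-B(z)\xi^*}{1-z\overline{\lambda}}\in\HH(B)$; we write $\xi=B(\lambda)$. The proof of that theorem also gives $B(z)\to\xi$ weakly in $l^2$, $K_z^B\to K_\lambda^B$ weakly in $\HH(B)$, and the limiting reproducing property $\langle f,K_\lambda^B\rangle_{\HH(B)}=\angle\lim_{z\to\lambda}f(z)$ for $f\in\HH(B)$. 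Taking $f=K_\lambda^B$ identifies $\|K_\lambda^B\|_{\HH(B)}^2=\angle\lim_{z\to\lambda}K_\lambda^B(z)=:K_\lambda^B(\lambda)$, and Lemma \ref{limitsclark} applied with $\xi=B(\lambda)$ computes this limit: since $\angle\lim_{z\to\lambda}(1-z\overline{\lambda})(1-B(z)B(\lambda)^*)^{-1}=\mu_{B(\lambda)}(\lambda)\neq0$, we get $\|K_\lambda^B\|_{\HH(B)}^2=K_\lambda^B(\lambda)=1/\mu_{B(\lambda)}(\lambda)$.

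It then remains to prove $\|K_z^B\|_{\HH(B)}\to\|K_\lambda^B\|_{\HH(B)}$. Weak lower semicontinuity of the norm gives the easy inequality $\|K_\lambda^B\|^2\le\liminf_{z\to\lambda}\|K_z^B\|^2$, where $\|K_z^B\|^2=K_z^B(z)=\frac{1-\|B(z)\|_{l^2}^2}{1-|z|^2}$. For the matching upper bound I would leave the vector-valued setting and compare $\HH(B)$ with the scalar space $\HH(b)$ for $b:=B\xi^*$. As noted in the paper, $\HH(B)$ is contractively contained in $\HH(b)$, which on the diagonal reads $K_z^B(z)\le k_z^b(z)=\frac{1-|b(z)|^2}{1-|z|^2}$. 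Since $b$ satisfies the scalar Carath\'eodory condition at $\lambda$ with the same associated measure, Sarason's Julia--Carath\'eodory theorem \cite{Sa88} provides the full nontangential limit $k_z^b(z)\to 1/\mu_{B(\lambda)}(\lambda)$. Hence $\limsup_{z\to\lambda}\|K_z^B\|^2\le 1/\mu_{B(\lambda)}(\lambda)=\|K_\lambda^B\|^2$, which together with the lower bound forces $\|K_z^B\|\to\|K_\lambda^B\|$ and therefore $K_z^B\to K_\lambda^B$ in $\HH(B)$.

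Finally I would read off the ``consequently'' identity from the norm convergence. It yields $K_z^B(z)=\frac{1-\|B(z)\|_{l^2}^2}{1-|z|^2}\to K_\lambda^B(\lambda)$ as a genuine nontangential limit. Because $B(z)\to\xi$ weakly with $\|\xi\|_{l^2}=1$ while $\|B(z)\|_{l^2}\le1$, one has $\|B(z)\|_{l^2}\to1$, so the factor $\frac{1+\|B(z)\|_{l^2}}{1+|z|}\to1$; factoring $\frac{1-\|B(z)\|_{l^2}^2}{1-|z|^2}=\frac{1-\|B(z)\|_{l^2}}{1-|z|}\cdot\frac{1+\|B(z)\|_{l^2}}{1+|z|}$ then shows $\frac{1-\|B(z)\|_{l^2}}{1-|z|}\to K_\lambda^B(\lambda)$. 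In particular its $\liminf$ equals $K_\lambda^B(\lambda)=\angle\lim_{z\to\lambda}\frac{1-B(z)B(\lambda)^*}{1-z\overline{\lambda}}=1/\mu_{B(\lambda)}(\lambda)$, which is the claimed chain of equalities.

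The step I expect to be the main obstacle is the upper bound $\limsup_{z\to\lambda}\frac{1-\|B(z)\|_{l^2}^2}{1-|z|^2}\le K_\lambda^B(\lambda)$, i.e.\ promoting the one-sided $\liminf$ of Theorem \ref{non-tangentiallimit}(i) to a two-sided nontangential limit. Weak convergence and semicontinuity only deliver the lower bound; the upper bound genuinely requires passing to the scalar function $b=B\xi^*$ and invoking the classical Julia--Carath\'eodory result, after which everything else is bookkeeping.
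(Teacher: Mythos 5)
Your proof is correct, and its skeleton coincides with the paper's: establish weak convergence $K_z^B\to K_\lambda^B$ (already done in Theorem \ref{non-tangentiallimit}), prove $\liminf\|K_z^B\|^2\ge\|K_\lambda^B\|^2$ by weak lower semicontinuity and $\limsup\|K_z^B\|^2\le\|K_\lambda^B\|^2$ by a matching upper bound, and conclude norm convergence. Both arguments also rest on the same Cauchy--Schwarz inequality $1-\|B(z)\|_{l^2}^2\le 1-|B(z)B(\lambda)^*|^2$. Where you diverge is in how the upper bound is obtained: you pass to the scalar function $b=B\xi^*$, note that the containment $\HH(B)\subseteq\HH(b)$ gives $K_z^B(z)\le k_z^b(z)$ on the diagonal, observe that $\mu_\xi$ is exactly the Clark measure of $b$, and then import the classical Julia--Carath\'eodory theorem to get $\angle\lim k_z^b(z)=1/\mu_\xi(\lambda)=\|K_\lambda^B\|^2$. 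The paper instead keeps everything internal: it sets $g=K_\lambda^B-\|K_\lambda^B\|^2$, expands $|B(z)B(\lambda)^*|^2$ directly, and uses $\tfrac{2\mathrm{Re}(1-z\overline{\lambda})}{1-|z|^2}\to 1$ nontangentially to compute $\angle\lim\tfrac{1-|B(z)B(\lambda)^*|^2}{1-|z|^2}=\|K_\lambda^B\|_{\HH(B)}^2$, which is precisely the scalar Julia--Carath\'eodory computation for $b$ written out by hand. So your route is shorter but leans on an external citation (which the paper itself licenses by remarking that $\HH(B)\hookrightarrow\HH(b)$ contractively and referring to \cite{Sa88}), while the paper's is self-contained; you also gain the explicit identification $\|K_\lambda^B\|^2=1/\mu_{B(\lambda)}(\lambda)$ via Lemma \ref{limitsclark}, which the paper's proof asserts in the statement but does not spell out. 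The only caveat, shared equally by the paper, is that the displayed $\liminf$ is unrestricted while both arguments really control the nontangential limit; neither proof addresses that distinction.
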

\begin{proof}
We know from Theorem \ref{non-tangentiallimit} that $K_{z}^B \rightarrow K_\lambda^B$ weakly as $z \rightarrow \lambda$ non-tangentially. So it is enough to show that $\|K_z^B\|_{\HH(B)} \rightarrow \|K_\lambda^B\|_{\HH(B)}$ as $z \rightarrow \lambda$ non-tangentially.

Note that
$$\angle\lim_{z \rightarrow \lambda} \frac{1-B(z)B(\lambda)^*}{1-z\overline{\lambda}} = \angle\lim_{z \rightarrow \lambda} \langle K_\lambda^B, K_z^B\rangle_{\HH(B)} = \|K_\lambda^B\|_{\HH(B)}^2.$$
Let $g(z) = K_\lambda^B(z) - \|K_\lambda^B\|_{\HH(B)}^2$. Then
$$B(z)B(\lambda)^* = 1 - (1-z\overline{\lambda})(g(z) + \|K_\lambda^B\|_{\HH(B)}^2).$$
So
\begin{align*}
&|B(z)B(\lambda)^*|^2 \\
&= 1 + |1-z\overline{\lambda}|^2 \left|g(z) + \|K_\lambda^B\|_{\HH(B)}^2\right|^2 -2\text{Re} (1-z\overline{\lambda})(g(z) + \|K_\lambda^B\|_{\HH(B)}^2).
\end{align*}
It is clear that
$$ \angle\lim_{z \rightarrow \lambda} \frac{|1-z\overline{\lambda}|^2 \left|g(z) + \|K_\lambda^B\|_{\HH(B)}^2\right|^2}{1-|z|^2} = 0.$$
Since $\angle\lim_{z \rightarrow \lambda} g(z) = 0$, we conclude that
$$\angle\lim_{z \rightarrow \lambda} \frac{1-|B(z)B(\lambda)^*|^2}{1-|z|^2} = \angle\lim_{z \rightarrow \lambda} \frac{2\text{Re} (1-z\overline{\lambda})\|K_\lambda^B\|_{\HH(B)}^2}{1-|z|^2}.$$
But since
$$\frac{2\text{Re} (1-z\overline{\lambda})}{1-|z|^2} = 1 + \frac{|z-\lambda|^2}{1-|z|^2},$$
and
$$\frac{1-|B(z)B(\lambda)^*|^2}{1-|z|^2} \geq \frac{1-\|B(z)\|_{l^2}^2}{1-|z|^2},$$
it follows that
\begin{align}\label{limsup}
\limsup \|K_z^B\|_{\HH(B)}^2 = \limsup \frac{1-\|B(z)\|_{l^2}^2}{1-|z|^2} \leq \|K_\lambda^B\|_{\HH(B)}^2
\end{align}
 as $z \rightarrow \lambda$ non-tangentially.
Since $K_{z}^B \rightarrow K_\lambda^B$ weakly as $z \rightarrow \lambda$ non-tangentially, we have
$$\|K_\lambda^B\|_{\HH(B)} \leq \liminf \|K_z^B\|_{\HH(B)}$$
as $z \rightarrow \lambda$ non-tangentially.
Thus $\|K_z^B\|_{\HH(B)} \rightarrow \|K_\lambda^B\|_{\HH(B)}$ as $z \rightarrow \lambda$ non-tangentially. The proof is complete.
\end{proof}
Once we have proved (\ref{limsup}), we can also use the following fact to obtain the conclusion in the above theorem: If $x_n \rightarrow x$ weakly in a Hilbert space and if $\limsup_{n \rightarrow \infty} \|x_n\|^2 \leq \|x\|^2$, then $\|x_n - x\|^2 \rightarrow 0$ as $n \rightarrow \infty$.
\begin{example}\label{infiniterankex2}
Let $b_1(z) = \frac{1+z}{2\sqrt{2}}, b_i(z) = \frac{z^i}{\sqrt{2^i}}, i = 2, 3,\ldots$ and $B = (b_1, b_2, \ldots)$. Then
$$B(1) = (\frac{1}{\sqrt{2}}, \frac{1}{\sqrt{2^2}}, \frac{1}{\sqrt{2^3}}, \ldots).$$
So
$$B(z)B(1)^* = \frac{1+z}{4} + \frac{z^2}{4-2z}, z\in \D,$$
and
$$\angle\lim_{z \rightarrow 1} (1-B(z)B(1)^*)(1-z)^{-1}$$
exists. Thus $B$ satisfies Carath\'{e}odory condition at $1$, and by Theorem \ref{non-tangentiallimit}, every $f \in \HH(B)$ has a non-tangential limit at $1$.
\end{example}

\subsection{Cyclic vectors} Let $B \in \HS(\HD, \C)$ be such that $\log (1- BB^*) \in L^1(\T)$. Then $B$ has a mate, and we have shown in Theorem \ref{polydense} that the polynomials are dense in $\HH(B)$. If furthermore, $A \in \HS(\HD, \HD)$ is the outer function in Theorem \ref{matematrix} and $A(0)$ is invertible, then $\HH(B)$ is $M_z$-invariant. In particular, when $\HD$ is finite dimensional, $\HH(B)$ is $M_z$-invariant.
\begin{lemma}\label{backforwshift}
Let $B \in \HS(\HD, \C)$ be such that $\HH(B)$ is $M_z$-invariant and polynomials are dense in $\HH(B)$. Suppose $B = (b_1, b_2, \ldots)$, then
$$L = M_z^* - \sum_{i=1}^\infty Lb_i \otimes b_i,$$
where the sum converges in the strong operator topology.
\end{lemma}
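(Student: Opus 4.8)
The plan is to derive the identity by taking the Hilbert-space adjoint, on $\HH(B)$, of the Ball--Bolotnikov formula already recalled in the proof of Proposition \ref{shiftinv}, and then to recognize the resulting correction term as the asserted strongly convergent series of rank-one operators. Throughout I would fix the standard orthonormal basis $\{e_i\}$ of $\HD = l^2$, so that $b_i = Be_i$ and $B(z)u = \sum_i b_i(z)\langle u, e_i\rangle$, and I would write $\tau\colon \HD \to \HH(B)$, $\tau u = LBu$, for the bounded operator from Proposition \ref{shiftinv}, so that $\tau e_i = Lb_i$.

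First I would extract from the two hypotheses the objects needed to even make sense of the statement. Since $\HH(B)$ is $M_z$-invariant and embeds contractively in $H^2$, the graph of $M_z$ on $\HH(B)$ is closed, so $M_z$, and hence $M_z^*$, is a bounded operator on $\HH(B)$; and $L^*$ is bounded because $L$ is a contraction. Because the polynomials are dense in $\HH(B)$, the constant function $\mathbf 1$ lies in $\HH(B)$. Now the pointwise identity $B(z)u = B(0)u + z\,(LBu)(z)$ reads $Bu = (B(0)u)\,\mathbf 1 + M_z\tau u$; since $\mathbf 1 \in \HH(B)$, since $M_z$ and $\tau$ are bounded, and since $u \mapsto B(0)u$ is a bounded linear functional (as $B(0) \in \HD$ with $\|B(0)\| \le 1$), this exhibits $\beta\colon \HD \to \HH(B)$, $\beta u := Bu$, as a well-defined bounded operator with $\tau = L\beta$. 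In particular each $b_i = \beta e_i$ lies in $\HH(B)$, so that every $Lb_i \otimes b_i$ is a genuine bounded rank-one operator on $\HH(B)$.

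Next I would invoke the formula $L^* = M_z - B\tau^* = M_z - \beta\tau^*$ on $\HH(B)$, valid here precisely because $\HH(B)$ is $M_z$-invariant (this is the identity used in Proposition \ref{shiftinv}). Taking adjoints of this identity between bounded operators on $\HH(B)$ gives
$$L = M_z^* - (\beta\tau^*)^* = M_z^* - \tau\beta^*,$$
so it remains to identify $\tau\beta^*$ with $\sum_i Lb_i \otimes b_i$. For $f \in \HH(B)$ one has $\langle \beta^* f, e_i\rangle_\HD = \langle f, \beta e_i\rangle_{\HH(B)} = \langle f, b_i\rangle_{\HH(B)}$, hence $\beta^* f = \sum_i \langle f, b_i\rangle_{\HH(B)}\, e_i$; applying the bounded operator $\tau$ and using $\tau e_i = Lb_i$ yields $\tau\beta^* f = \sum_i \langle f, b_i\rangle_{\HH(B)}\, Lb_i = \sum_i (Lb_i \otimes b_i)f$. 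Since $\beta^* f \in \HD = l^2$, the truncations $\sum_{i \le N}\langle f, b_i\rangle_{\HH(B)}\, e_i$ converge to $\beta^* f$ in $\HD$, and continuity of $\tau$ turns this into convergence of $\sum_{i \le N}(Lb_i \otimes b_i)f$ to $\tau\beta^* f$ in $\HH(B)$; thus the series converges in the strong operator topology and $L = M_z^* - \sum_i Lb_i \otimes b_i$.

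The genuinely delicate point, and the one I would be most careful about, is the passage to the infinite-rank case: because $\HD$ may be infinite dimensional one cannot expect norm convergence of $\sum_i Lb_i \otimes b_i$, and the argument must deliver exactly strong operator convergence. This is why I route the whole identification through the \emph{single} bounded operator $\tau\beta^*$ rather than estimating the tail of the series directly. A secondary point to handle cleanly is that $\beta u = Bu$ really does land in $\HH(B)$ under these hypotheses, rather than under the stronger assumption $A(0)$ invertible of Proposition \ref{shiftinv}; the decomposition $Bu = (B(0)u)\,\mathbf 1 + M_z\tau u$ together with $\mathbf 1 \in \HH(B)$ is what makes this work, and it is here that the density of the polynomials is used.
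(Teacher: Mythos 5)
Your proposal is correct and follows essentially the same route as the paper: both start from the Ball--Bolotnikov formula $L^*f = zf - B\tau^*f$ with $\tau u = LBu$, verify $b_i = zLb_i + b_i(0) \in \HH(B)$, identify the correction term as the rank-one series, and pass to adjoints. Your factorization of the series as the single bounded operator $\tau\beta^*$ is a slightly more careful justification of the strong operator convergence of the adjointed series, a point the paper leaves implicit in ``the conclusion follows.''
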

\begin{proof}
Let $\tau: \HD \rightarrow \HH(B)$ be defined by $\tau (u) = LB u, u \in \HD$.
Then $\tau$ is bounded and by \cite[Theorem 1]{BallBoloBasics}, we have
$$L^* f(z) = zf(z) - B(z)\tau^*(f), f \in \HH(B).$$
One checks that $B(z)\tau^*(f) = \sum_{i=1}^\infty \langle f, Lb_i\rangle_{\HH(B)} b_i(z)$. Note that each $b_i \in \HH(B)$. In fact, if we identify $\HD$ with $l^2$, let $\{e_i\}$ be the standard orthonormal basis of $l^2$, then $\tau (e_i) = LB e_i = Lb_i \in \HH(B)$. So $b_i = z Lb_i + b_i(0) \in \HH(B)$. Thus
$$L^* = M_z - \sum_{i=1}^\infty b_i \otimes Lb_i,$$
and the conclusion follows.
\end{proof}
In the setting of the above lemma, if furthermore, $\sum_{i=1}^\infty b_i \otimes Lb_i$ is compact, then the spectrum $\sigma(M_z) = \overline{\D}$. Since in this case, the essential spectra of $M_z$ and $L^*$ are the same. Also noting that $L$ is a contraction, so when $|\lambda| > 1$, $M_z - \lambda$ is Fredholm  with index zero, hence it is invertible.

When polynomials are dense in $\HH(B)$, for $\lambda \in \T$, let $e_\lambda: \HH(B) \rightarrow \C$ be densely defined by $e_\lambda(p) = p(\lambda)$, where $p$ is a polynomial.
\begin{lemma}\label{pointevl}
Let $B \in \HS(\HD, \C)$ be such that $\HH(B)$ is $M_z$-invariant and polynomials are dense in $\HH(B)$. Let $\lambda \in \T$. Then the following are equivalent.
\begin{itemize}
\item[(i)] $e_\lambda$ extends to be a bounded operator from $\HH(B)$ to $\C$.
\item[(ii)] $\overline{\lambda}$ is an eigenvalue of $M_z^*$.
\item[(iii)] $[z-\lambda] \neq \HH(B)$, where $[z-\lambda]$ is the closure of the polynomial multiples of $z-\lambda$.
\item[(iv)] Every function $f \in \HH(B)$ has a non-tangential limit at $\lambda$.
\end{itemize}
\end{lemma}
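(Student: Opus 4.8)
The plan is to prove the chain $(\mathrm{i})\Leftrightarrow(\mathrm{ii})\Leftrightarrow(\mathrm{iii})$ by soft functional-analytic arguments and then to connect these to $(\mathrm{iv})$ through Theorem \ref{non-tangentiallimit}. Throughout I would use that, since $\HH(B)$ is $M_z$-invariant, the operator $M_z$ is bounded on $\HH(B)$ by the closed graph theorem, so $M_z^*$ is a well-defined bounded operator, and that the polynomials are dense by hypothesis.

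For $(\mathrm{ii})\Leftrightarrow(\mathrm{iii})$ I would compute the annihilator of $[z-\lambda]$: a vector $k$ is orthogonal to $[z-\lambda]$ iff $\langle (z-\lambda)q,k\rangle_{\HH(B)}=0$ for every polynomial $q$, i.e. iff $\langle q,(M_z^*-\overline{\lambda})k\rangle_{\HH(B)}=0$ for all $q$; since the polynomials are dense this says exactly $M_z^*k=\overline{\lambda}k$. Hence $[z-\lambda]^\perp=\ker(M_z^*-\overline{\lambda})$, so $[z-\lambda]\neq\HH(B)$ precisely when $\overline{\lambda}$ is an eigenvalue of $M_z^*$. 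For $(\mathrm{i})\Leftrightarrow(\mathrm{ii})$: if $e_\lambda$ is bounded, Riesz representation gives $k_\lambda\in\HH(B)$ with $p(\lambda)=\langle p,k_\lambda\rangle_{\HH(B)}$ for all polynomials; applying this to $zp$ yields $\langle p,(M_z^*-\overline{\lambda})k_\lambda\rangle_{\HH(B)}=0$ for all $p$, so $M_z^*k_\lambda=\overline{\lambda}k_\lambda$, and $k_\lambda\neq0$ because $e_\lambda(1)=1$. Conversely an eigenvector $k$ at $\overline{\lambda}$ satisfies $\langle z^n,k\rangle_{\HH(B)}=\lambda^n\langle1,k\rangle_{\HH(B)}$, forcing $\langle1,k\rangle_{\HH(B)}\neq0$ (otherwise $k$ is orthogonal to all polynomials, hence zero) and $\langle p,k\rangle_{\HH(B)}=p(\lambda)\langle1,k\rangle_{\HH(B)}$, so $e_\lambda$ is bounded.

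The implication $(\mathrm{iv})\Rightarrow(\mathrm{i})$ is immediate from Theorem \ref{non-tangentiallimit}: statement $(\mathrm{iv})$ here is precisely condition $(\mathrm{iii})$ of that theorem, which provides the reproducing kernel $K_\lambda^B\in\HH(B)$ with $f(\lambda)=\langle f,K_\lambda^B\rangle_{\HH(B)}$; restricting to polynomials shows $e_\lambda$ is bounded. The substance of the lemma is the reverse implication $(\mathrm{ii})\Rightarrow(\mathrm{iv})$, and here I would use the formula of Lemma \ref{backforwshift}, namely $M_z^*=L+\sum_i Lb_i\otimes b_i$. Let $k_\lambda\neq0$ satisfy $M_z^*k_\lambda=\overline{\lambda}k_\lambda$ and put $\xi_i=\langle k_\lambda,b_i\rangle_{\HH(B)}$. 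Multiplying the eigenvalue equation by $z$ and using $z\,Lf=f-f(0)$ termwise gives
\begin{align*}
k_\lambda(z)(1-z\overline{\lambda})=c-\sum_i\xi_i b_i(z),\qquad c=k_\lambda(0)+\sum_i\xi_i b_i(0),
\end{align*}
so that $k_\lambda(z)=\dfrac{c-B(z)\eta^*}{1-z\overline{\lambda}}$ for the coefficient vector $\eta$ determined by the $\xi_i$. After checking $c\neq0$ and rescaling, this exhibits a unit vector $\xi\in l^2$ with $\dfrac{1-B(z)\xi^*}{1-z\overline{\lambda}}=k_\lambda/c\in\HH(B)$, which is exactly hypothesis $(\mathrm{ii})$ of Theorem \ref{non-tangentiallimit}; that theorem then yields $(\mathrm{iv})$.

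The main obstacle is this last step: justifying the manipulations of the infinite sum $\sum_i Lb_i\otimes b_i$ when $\dim\HD=\infty$ (it converges only in the strong operator topology), verifying that the coefficient sequence $(\xi_i)$ lies in $l^2$, and confirming both that the constant $c$ is nonzero and that the normalization produces $\|\xi\|_{l^2}=1$, so that Theorem \ref{non-tangentiallimit}$(\mathrm{ii})$ genuinely applies. I expect $\|\xi\|_{l^2}=1$ to follow from the fact that $k_\lambda\in H^2$ forces the numerator $1-B(z)\xi^*$ to have nontangential limit $0$ at $\lambda$, while $c\neq0$ should be extracted from $\langle1,k_\lambda\rangle_{\HH(B)}=1$; making these quantitative is the delicate part of the argument.
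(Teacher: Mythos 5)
Your equivalences $(\mathrm{i})\Leftrightarrow(\mathrm{ii})\Leftrightarrow(\mathrm{iii})$ and the implication $(\mathrm{iv})\Rightarrow(\mathrm{i})$ are correct and essentially what the paper does; your direct argument that an eigenvector $k$ of $M_z^*$ at $\overline{\lambda}$ satisfies $\langle z^n,k\rangle=\lambda^n\langle 1,k\rangle$ (forcing $\langle 1,k\rangle\neq 0$ by density of polynomials) is a clean substitute for the paper's quotient-space argument for $(\mathrm{iii})\Rightarrow(\mathrm{i})$. You have also correctly identified that the substance of the lemma is $(\mathrm{ii})\Rightarrow(\mathrm{iv})$ and that the route is the identity $k_\lambda(z)(1-z\overline{\lambda})=c-\sum_i\xi_i b_i(z)$ obtained from Lemma \ref{backforwshift}. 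The problem is that you leave precisely the two facts that make this route work --- $c\neq 0$ and $\|\xi\|_{l^2}=1$ --- unproved, and the heuristics you offer do not close them. In particular, your proposed argument for $\|\xi\|_{l^2}=1$ (that $k_\lambda\in H^2$ forces the numerator to tend to $0$ nontangentially, hence $B(z)\eta^*\to c$) only yields $|c|\le\|\eta\|_{l^2}$, i.e.\ $\|\xi\|_{l^2}=\|\eta\|_{l^2}/|c|\ge 1$. That is the wrong inequality: Theorem \ref{non-tangentiallimit}$(\mathrm{ii})$ genuinely needs $\|\xi\|_{l^2}=1$ (its proof of $(\mathrm{ii})\Rightarrow(\mathrm{iii})$ uses $|1-B(z)\xi^*|\ge 1-\|B(z)\|_{l^2}\|\xi\|_{l^2}$ and the pinching $\xi_0=\xi$ from $\xi_0\xi^*=1$ with $\|\xi_0\|\le 1=\|\xi\|$), so an estimate $\|\xi\|_{l^2}\ge 1$ does not suffice.

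The missing idea is a normalization plus two exact evaluations with $e_\lambda$. Normalize $k_\lambda$ so that $e_\lambda(f)=\langle f,k_\lambda\rangle_{\HH(B)}$ for all $f$ (possible by your own observation that $\langle 1,k_\lambda\rangle\neq 0$); then $\xi_i=\langle k_\lambda,b_i\rangle=\overline{b_i(\lambda)}$ with $b_i(\lambda):=e_\lambda(b_i)$. First, pairing $k_\lambda$ with $K_0^B=1-\sum_i b_i\overline{b_i(0)}$ gives $\overline{k_\lambda(0)}=e_\lambda(K_0^B)=1-\sum_i b_i(\lambda)\overline{b_i(0)}$, and substituting this into $c=k_\lambda(0)+\sum_i\xi_i b_i(0)$ yields $c=1$ exactly --- no separate nonvanishing argument is needed. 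Second, apply the bounded functional $e_\lambda$ to the identity $(1-z\overline{\lambda})k_\lambda=1-\sum_i\overline{b_i(\lambda)}b_i$: the left side gives $(1-\lambda\overline{\lambda})e_\lambda(k_\lambda)=0$ since $|\lambda|=1$, while the right side gives $1-\sum_i|b_i(\lambda)|^2$; hence $\|\xi\|_{l^2}^2=\sum_i|b_i(\lambda)|^2=1$ exactly, and Theorem \ref{non-tangentiallimit}$(\mathrm{ii})$ applies. You should also record the two convergence facts you flagged: $(\langle k_\lambda,b_i\rangle)_i\in l^2$ because $u\mapsto LBu$ is bounded from $l^2$ into $\HH(B)$, and $\sum_i\overline{b_i(\lambda)}b_i$ converges in $\HH(B)$ because $\sum_i\langle k_\lambda,b_i\rangle Lb_i$ converges and $\HH(B)$ is $M_z$-invariant. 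Without these points the proof of the one implication that carries the weight of the lemma is incomplete.
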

\begin{proof}
(i) $\Rightarrow$ (ii) If $e_\lambda$ extends to be a bounded operator from $\HH(B)$ to $\C$, then the extension is unique and we still denote this operator by $e_\lambda$. So there exists $K \in \HH(B), K \neq 0$ such that
$$f(\lambda): = e_\lambda(f) = \langle f, K\rangle_{\HH(B)}.$$
Then $e_\lambda(zf) = \lambda f(\lambda)$ and
$$\langle f, M_z^* K\rangle_{\HH(B)} = \langle zf, K\rangle_{\HH(B)} = \lambda f(\lambda) = \langle f, \overline{\lambda} K\rangle_{\HH(B)}.$$
So $\overline{\lambda}$ is an eigenvalue of $M_z^*$.

(ii) $\Rightarrow$ (iii) Suppose $M_z^* K = \overline{\lambda} K$ with $K \in \HH(B)$. Then it is clear that $(z-\lambda)p$ is orthogonal to $K$ for any polynomial $p$. Thus $[z-\lambda] \neq \HH(B)$.

(iii) $\Rightarrow$ (i) If $[z-\lambda] \neq \HH(B)$, since polynomials are dense in $\HH(B)$, we have $\HH(B) = [z - \lambda] + \C$ and $\HH(B)/ [z - \lambda]$ is one dimensional. Let $\pi: \HH(B) \rightarrow \HH(B)/ [z - \lambda]$ be the quotient map, and let $\varphi: \HH(B)/ [z - \lambda] \rightarrow \C$ be the linear map defined by $\varphi (\pi(1)) = 1$. Then $e_\lambda(p) = \varphi (\pi(p))$, $p$ polynomial. Thus $e_\lambda$ is bounded.

(iv) $\Rightarrow$ (i) If every function $f \in \HH(B)$ has a non-tangential limit at $\lambda$, then from the proof of Theorem \ref{non-tangentiallimit}, we know that $B$ has a non-tangential limit at $\lambda$, and $K_\lambda^B(z) = \frac{1-B(z)B(\lambda)^*}{1-z\overline{\lambda}} \in \HH(B)$ satisfying
$$f(\lambda) = \langle f, K_\lambda^B\rangle_{\HH(B)}.$$
Thus (i) holds.

(i) and (ii) $\Rightarrow$ (iv) Suppose $M_z^* K = \overline{\lambda} K$ with $f(\lambda) : = e_\lambda(f) = \langle f, K\rangle_{\HH(B)}, f \in \HH(B)$. Note that $\tau: l^2 \rightarrow \HH(B)$ defined by $\tau (u) = LB u, u \in l^2$ is bounded. Thus for $u = (u_i)_i \in l^2$, $LB u = \sum_{i=1}^\infty Lb_i u_i$ converges in norm in $\HH(B)$. So $Bu = \sum_{i=1}^\infty b_i u_i = z LB u + \sum_{i = 1}^\infty b_i(0) u_i$ converges in $\HH(B)$.
Then
\begin{align*}
\overline{K(0)}&  = \langle K_0^B, K\rangle_{\HH(B)}\\
& = K_0^B(\lambda) = e_\lambda (K_0^B) = e_\lambda \left(1 - \sum_{i=1}^\infty b_i \overline{b_i(0)}\right)\\
& = 1 - \sum_{i=1}^\infty b_i(\lambda) \overline{b_i(0)}.
\end{align*}
By Lemma \ref{backforwshift}, we have
\begin{align*}
\overline{\lambda} K& = M_z^* K = LK + \sum_{i=1}^\infty \langle K, b_i\rangle_{\HH(B)}Lb_i\\
& = \frac{K-K(0)}{z} + \sum_{i=1}^\infty \overline{b_i(\lambda)}\frac{b_i-b_i(0)}{z}\\
& = \frac{K-K(0)}{z} + \frac{\sum_{i=1}^\infty \overline{b_i(\lambda)} b_i - \sum_{i=1}^\infty \overline{b_i(\lambda)} b_i(0)}{z}.
\end{align*}
Since $\sum_{i=1}^\infty \langle K, b_i\rangle_{\HH(B)}Lb_i$ converges in $\HH(B)$, $1 - \sum_{i=1}^\infty b_i(\lambda) \overline{b_i(0)} = \overline{K(0)}$ and $\HH(B)$ is $M_z$-invariant, we have $\sum_{i=1}^\infty \overline{b_i(\lambda)} b_i$ converges in $\HH(B)$.
So
$$K(z) = \frac{K(0) - \sum_{i=1}^\infty \overline{b_i(\lambda)} b_i(z) + \sum_{i=1}^\infty \overline{b_i(\lambda)} b_i(0)}{1-z\overline{\lambda}} = \frac{1 - \sum_{i=1}^\infty \overline{b_i(\lambda)} b_i(z)}{1-z\overline{\lambda}}.$$
Note that
\begin{align*}
0 & = (1-|\lambda|^2)K(\lambda) = e_\lambda ((1-z\overline{\lambda})K)\\
& =e_\lambda\left(1 - \sum_{i=1}^\infty \overline{b_i(\lambda)} b_i\right)\\
& = 1 - \sum_{i=1}^\infty |b_i(\lambda)|^2.
\end{align*}
Thus $\|B(\lambda)\|_{l^2}^2 = \sum_{i=1}^\infty |b_i(\lambda)|^2 = 1$, and so (iv) follows from Theorem \ref{non-tangentiallimit} (ii) and (iii).
\end{proof}

By the above proof, if $\overline{\lambda}$ is an eigenvalue of $M_z^*$, then $\ker e_\lambda = [z-\lambda]$ and $\dim [z-\lambda]^\perp = 1$.
Suppose $\HH(B)$ is $M_z$-invariant. If $f \in \HH(B)$ and $[f] = \HH(B)$, then we say that $f$ is cyclic in $\HH(B)$.
When $B = b \in \HS(\C, \C)$ is a non-extreme rational function, cyclic vectors of $M_z$ were characterized in \cite{GL23, FG}.
When $\dim \HD = 1$, the following result is Theorem 1 in \cite{Berg} which includes the case $B = b \in \HS(\C, \C)$ rational. We restate Theorem C in the following.
\begin{theorem}\label{cyclicvector}
Let $B \in \HS(\HD, \C)$ be such that $\log (1- BB^*) \in L^1(\T)$, and $a$ the mate of $B$. Suppose $\dim \HD < \infty, \dim (aH^2)^\perp < \infty$, and $\overline{\lambda_1}, \overline{\lambda_2}, \ldots, \overline{\lambda_s}$ are the distinct eigenvalues of $M_z^*$ on $(aH^2)^\perp$. Then each $\overline{\lambda_j}$ is in $\T$, and $f \in \HH(B)$ is cyclic in $\HH(B)$ if and only if $f$ is outer and $f(\lambda_j) \neq 0$, $j = 1, 2, \ldots, s$.
\end{theorem}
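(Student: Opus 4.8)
\emph{The plan.} Throughout I write $\HN=[f]$ for the smallest closed $M_z$-invariant subspace containing $f$, so $f$ is cyclic exactly when $\HN=\HH(B)$. I read $aH^2=\HM(a)=\{ah:h\in H^2\}$, which by Lemma \ref{containment} lies in $\HH(B)$ and is $M_z$-invariant, and I read $(aH^2)^\perp$ as its orthogonal complement in $\HH(B)$, so $\HH(B)=\overline{aH^2}\oplus(aH^2)^\perp$ with $(aH^2)^\perp$ finite dimensional and $M_z^*$-invariant. The first thing I would record is that every eigenvalue of $M_z^*$ on $(aH^2)^\perp$ lies on $\T$: for $\mu\in\D$ the kernel $K_\mu^B$ spans $\ker(M_z^*-\overline{\mu})$, and $\langle az^n,K_\mu^B\rangle_{\HH(B)}=a(\mu)\mu^n$; since $a$ is outer, $a(\mu)\neq0$, so $K_\mu^B\notin(aH^2)^\perp$. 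Thus no interior point is an eigenvalue on $(aH^2)^\perp$, forcing each $\overline{\lambda_j}\in\T$. At such a boundary point Carath\'eodory's condition holds, so by Theorem \ref{non-tangentiallimit} and Lemma \ref{pointevl} the functional $e_{\lambda_j}$ is bounded, $K_{\lambda_j}^B\in\HH(B)$ is the corresponding eigenvector, $f(\lambda_j)=\langle f,K_{\lambda_j}^B\rangle$, and $a(\lambda_j)=0$ gives $K_{\lambda_j}^B\in(aH^2)^\perp$.

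\emph{Necessity.} Suppose $f$ is cyclic. If $f$ had a nonconstant inner factor $\theta$, then $\HN\subseteq\theta H^2\cap\HH(B)$, which is closed in $\HH(B)$ and proper because $1\in\HH(B)$ (Lemma \ref{containment}) while $1\notin\theta H^2$; this contradicts cyclicity, so $f$ must be outer. If $f(\lambda_j)=0$ for some $j$, then $f\perp K_{\lambda_j}^B$, and since $\C K_{\lambda_j}^B$ is a nonzero $M_z^*$-invariant subspace we get $\HN\subseteq(\C K_{\lambda_j}^B)^\perp\neq\HH(B)$; equivalently $f\in\ker e_{\lambda_j}=[z-\lambda_j]$. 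Hence cyclicity forces $f(\lambda_j)\neq0$ for all $j$.

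\emph{Sufficiency.} Assume $f$ is outer and $f(\lambda_j)\neq0$ for every $j$. First I would reduce to a density statement: the $H^2$-closure of $\HN$ is $M_z$-invariant in $H^2$, hence equals $\eta H^2$ for an inner $\eta$ by Beurling's theorem; as $f\in\eta H^2$ is outer, $\eta$ is constant and $\HN$ is dense in $H^2$. The core step is then the inclusion $\HM(a)\subseteq\HN$, equivalently $[a]\subseteq[f]$, equivalently $\HN^\perp\subseteq(aH^2)^\perp$. Granting this, $\HN^\perp$ is a finite-dimensional $M_z^*$-invariant subspace; if it were nonzero it would contain an eigenvector of $M_z^*$, which by the first paragraph is a multiple of some $K_{\lambda_j}^B$. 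But $f\in\HN$ gives $f\perp\HN^\perp$, whence $f(\lambda_j)=\langle f,K_{\lambda_j}^B\rangle=0$, a contradiction. Therefore $\HN^\perp=0$ and $\HN=\HH(B)$.

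\emph{The main obstacle.} Everything rests on proving $[a]\subseteq[f]$ for $f$ outer, that is, on upgrading the $H^2$-density of $\{pf:p\text{ a polynomial}\}$ to an $\HH(B)$-statement that captures $\HM(a)$. The plan is to imitate the mate-transfer mechanism of Theorem \ref{backwardinv}: if $u_n\to u$ in $H^2$ then $T_{\overline{a}}u_n\to T_{\overline{a}}u$ in $\HH(B)$ (Lemma \ref{containment}), and $\HM(\overline{a})$ is dense in $\HH(B)$ (Theorem \ref{polydense}). The hard part will be that $[f]$ is $M_z$-invariant rather than $L$-invariant, and the analytic $M_z$ does not commute with the co-analytic $T_{\overline{a}}$: the commutator $T_{\overline{a}}M_z-M_zT_{\overline{a}}$ has rank one into the constants, so $T_{\overline{a}}(z^nf)$ carries polynomial correction terms that need not lie in $[f]$. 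This is exactly where the finiteness hypotheses must enter. On $\T$ one has $\det(I_\HD-B^*B)=1-BB^*=|a|^2$, so $\det A$ and $a$ agree in modulus, both are outer, and $T_A$ fails to be invertible only through the finitely many boundary zeros $\lambda_1,\dots,\lambda_s$ of $a$; this finite-dimensional defect is precisely $(aH^2)^\perp$, and the hypothesis $f(\lambda_j)\neq0$ is what should let the transfer argument close across these points. Carrying out this finite-rank analysis is the crux I expect to have to fight through.
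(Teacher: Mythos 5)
There is a genuine gap at the center of your argument, and you have flagged it yourself: you never prove the inclusion $aH^2\subseteq[f]$ (equivalently $[f]^\perp\subseteq (aH^2)^\perp$) for outer $f$; you only identify it as ``the main obstacle'' and sketch a direction (commutators of $T_{\overline a}$ with $M_z$, the determinant identity $|\det A|=|a|$ on $\T$) that does not lead to a proof and is not how the difficulty is actually resolved. The paper closes this step by a different mechanism. First, $a$ is a multiplier of $\HH(B)$, because $a\HH(B)\subseteq\HM(a)\subseteq\HH(B)$ by Lemma \ref{containment}. Second, by Theorem 5.11 of \cite{AM19}, if $\phi\in[f]\ominus z[f]$ has norm $1$ then
$$[f]=\{g\in\HH(B):\ g/\phi\in H^2,\ (g/\phi)\phi^+\in H^2(\HD)\},$$
and this description is manifestly stable under multiplication by the $H^\infty$ function $a$, so $a[f]\subseteq[f]$. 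Third, a two-step approximation finishes it: given $h\in H^2$, outerness of $f$ gives a polynomial $q$ with $\|h-qf\|_{H^2}<\varepsilon/2$; then $aqf\in[f]$, so some polynomial $p$ gives $\|aqf-pf\|_{\HH(B)}<\varepsilon/2$; and since $M_a:H^2\to\HH(B)$ is contractive, $\|ah-aqf\|_{\HH(B)}\le\|h-qf\|_{H^2}$, whence $ah\in[f]$. Without an argument of this kind (or some substitute for the Aleman--Malman characterization of $[f]$), your proof does not go through: the $H^2$-density of $\{pf\}$ that you do establish says nothing about the $\HH(B)$-norm, which is exactly the point of the theorem.

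The parts you do carry out are essentially correct and match the paper: the location of the eigenvalues on $\T$ via $a(\mu)\neq0$ for $\mu\in\D$, the necessity direction, and the endgame. Indeed your endgame is slightly cleaner than the paper's: you observe that the finite-dimensional $M_z^*$-invariant subspace $[f]^\perp\subseteq(aH^2)^\perp$ must contain an eigenvector, necessarily a multiple of some $K_{\lambda_j}^B$ (one-dimensionality of eigenspaces coming from density of polynomials, as in the remark after Lemma \ref{pointevl}), contradicting $f(\lambda_j)\neq0$; the paper reaches the same conclusion by an explicit computation in the root subspaces $\HN_{\lambda_j}$. But this does not compensate for the missing central step.
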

\begin{proof}
If $\lambda_j \in \D$ for some $j$, then $K_{\lambda_j}^B$ is the eigenvector associated with $\overline{\lambda_j}$ for $M_z^*$. So $K_{\lambda_j}^B \perp aH^2$, and $a(\lambda_j) = 0$, which is a contradiction. Thus each $\overline{\lambda_j}$ is in $\T$. By Lemma \ref{pointevl}, each $f(\lambda_j)$ exists.
If $f \in \HH(B)$ is cyclic in $\HH(B)$, then it is clear that $f$ is outer and $f(\lambda_j) \neq 0$, $j = 1, 2, \ldots, s$.

Now suppose $f$ is outer and $f(\lambda_j) \neq 0$, $j = 1, 2, \ldots, s$. Note that $a$ is a multiplier of $\HH(B)$. In fact, by Lemma \ref{containment}, $\HM(a) \subseteq \HH(B)$. So $a\HH(B) \subseteq \HM(a) \subseteq \HH(B)$ and $a$ is a multiplier of $\HH(B)$.
By \cite[Theorem 5.11]{AM19}, if $\phi \in [f] \ominus z[f]$ is of norm $1$, then
$$[f] = \{g \in \HH(B): g/\phi \in H^2, (g/\phi)\phi^+ \in H^2(\HD)\},$$
where $\phi^+$ is the unique function satisfying Theorem \ref{matematrix} (ii). Thus $a[f] \subseteq [f]$. Now we show $aH^2 \subseteq [f]$. Let $\varepsilon >0 $ and let $h \in H^2$. Since $f$ is outer, there exists a polynomial $q$ such that
$$\|h -qf\|_{H^2} < \frac{\varepsilon}{2}.$$
Using that $a[f] \subset [f]$, we see that $aqf \in [f]$, and so there exists a polynomial $p$ such that
$$\|aqf - pf\|_{\HH(B)} < \frac{\varepsilon}{2}.$$
Thus it follows that
\begin{align*}
\|ah - pf\|_{\HH(B)}& \leq \|ah - aqf\|_{\HH(B)} + \|aqf - pf\|_{\HH(B)}\\
& \leq \|h -qf\|_{H^2} + \|aqf - pf\|_{\HH(B)}\\
& < \frac{\varepsilon}{2} + \frac{\varepsilon}{2} = \varepsilon.
\end{align*}
Thus $aH^2 \subset [f]$.
It is left to show that $(aH^2)^\perp \subseteq [f]$. Since polynomials are dense in $\HH(B)$, the eigenspaces of $M_z^*$ are one dimensional, and so the minimal and characteristic polynomials of $M_z^*|(aH^2)^\perp$ coincide. Let
$$\HN_{\lambda_j} : = \bigvee_{n\ge 0} \ker (M_z^*-\overline{\lambda_j})^n$$
be the root subspace of $M_z^*$ corresponding to the eigenvector $\overline{\lambda_j}$. Then it is clear that $(aH^2)^\perp=\bigvee_{j=1}^s \HN_{\lambda_j}$.
Note that $[f]^\perp$ is an $M_z^*$-invariant subspace contained in $(aH^2)^\perp$. If there is $h \in \bigvee_{j=1}^s \HN_{\lambda_j}\setminus \{0\}$ such that $h \in [f]^\perp$, we then claim that there is some $j$ such that $K_{\lambda_j}^B \in [f]^\perp$, where $\langle f, K_{\lambda_j}^B\rangle_{\HH(B)} = f(\lambda_j)$. But this then contradicts that $f(\lambda_j) \neq 0$. We thus obtain that $[f] = \HH(B)$.

Now we prove the claim. Suppose $\dim \HN_{\lambda_j} = m_j, K_{\lambda_j}^l \in \ker(M_z^*-\overline{\lambda_j})^l$ and $K_{\lambda_j}^l \not\in \ker(M_z^*-\overline{\lambda_j})^{l-1}, K_{\lambda_j}^1 = K_{\lambda_j}^B, j =1, \ldots, s$, this is possible since the minimal and characteristic polynomials of $M_z^*|(aH^2)^\perp$ coincide. Then there are $c_{j,l} \in \C$ such that $h = \sum_{j=1}^s\sum_{l=1}^{m_j} c_{j,l} K_{\lambda_j}^l$. Suppose there is some $j$ such that $c_{j,l} \neq 0$. Then
$$0 \neq \Pi_{k \neq j} (M_z^* - \overline{\lambda_k})^{m_k} h \in [f]^\perp \cap \HN_{\lambda_j}.$$
So we may assume that $h = \sum_{l=1}^{m_j} c_l K_{\lambda_j}^l$ and $c_m \neq 0$, $c_l = 0, l > m$. Then $$(M_z^* -\overline{\lambda_j})^{m-1} h = c_m (M_z^* -\overline{\lambda_j})^{m-1} K_{\lambda_j}^m$$
which is a nonzero constant multiple of $K_{\lambda_j}^B$. This proves the claim and finishes the proof.
\end{proof}
By the above proof, we can actually determine all the $M_z$-invariant subspaces of $\HH(B)$ which contains $(aH^2)^\perp$.
When $B$ is a rational Schur function, the $M_z$-invariant subspaces of $\HH(B)$ are characterized by \cite[Theorem 3.10]{GL23}. Note that \cite[Theorem 3.10]{GL23} is stated for scalar rational functions $b$, but it holds for rational Schur functions $B$, see \cite[Theorem 7.2]{LGR}.

\

\noindent \textbf{Acknowledgements.}
S. Luo was supported by NNSFC (12271149), Natural Science Foundation of Hunan Province (2024JJ2008).
The authors thank the referee for careful reading of the paper and many detailed comments which greatly improve the presentation of this paper.

\end{document}